\author{Márton~Elekes}
\address{\normalfont (ME) Alfréd Rényi Institute of Mathematics, Budapest, Hungary AND Eötvös Loránd University, Budapest, Hungary}
\email{elekes.marton@renyi.hu}
\author{Tamás~Kátay}
\address{\normalfont (TK) Alfréd Rényi Institute of Mathematics, Budapest, Hungary}
\email{13heted@gmail.com}
\author{Anett~Kocsis}
\address{\normalfont (AK) Eötvös Loránd University, Budapest, Hungary}
\email{sakkboszi@gmail.com}
\title{Elusive properties of countably infinite graphs}
\date{\today}
\begin{document}


\begin{abstract}
A graph property is \emph{elusive} (or evasive) if any algorithm testing it by asking questions of the form ``\emph{Is there an edge between vertices $x$ and $y$?}'' must, in the worst case, examine all pairs of vertices. Elusiveness for \emph{infinite vertex sets} has been first studied by Csernák and Soukup \cite{CSERNAK_SOUKUP_2023}, who proved that the long-standing Aanderaa--Karp--Rosenberg Conjecture --- which states that every nontrivial monotone graph property is elusive --- fails for infinite vertex sets. We extend their work by giving a closer look to the case when the vertex set is countably infinite and the ``algorithm'' terminates after infinitely many steps. Among others, we prove that connectedness is elusive, which strengthens a result of Csernák and Soukup. We give counterexamples to the infinite version of the Aanderaa--Karp--Rosenberg Conjecture even if the ``algorithm'' is required to terminate after infinitely many steps, which strengthens results~in~\cite{CSERNAK_SOUKUP_2023}.
\end{abstract}

\maketitle

\tableofcontents

\newpage

\section{Introduction}\label{s.intro}

\subsubsection*{\textbf{1.1. History}}

We start with a classical problem. Fix a vertex set $V$ and a graph property $\cals$. Two players, Seeker and Hider, play a game as follows: in each turn, Seeker chooses a pair $\{u,v\}$ of vertices from $V$ not chosen before, and Hider decides whether it becomes an edge or a not. This way they are building a graph on $V$. Seeker's goal is to decide whether the graph they are building has the property $\cals$ \emph{without playing all the pairs} from $V$. The property $\cals$ is \emph{elusive} if Hider has a winning strategy in this game.

For finite vertex sets $V$, elusive properties were introduced by Rosenberg \cite{ROSENBERG_1973} in 1973 and have been extensively studied ever since, see \cite{RIVEST_VUILLEMIN_1975, LENSTRA_BEST_VAN_EMDE_BOAS, BOLLOBAS_1976, KAHN_SAKS_STURTEVANT_1984, YAO_1988, CHAKRABARTI_KHOT_SHI, BABAI_BANERJEE_KULKARNI_NAIK_2010, KULKARNI_2013, SCHEIDWILER_TRIESCH_2013, MILLER_2013, SHPARLINSKI_2014, ANGEL_BORJA_2019}. Notably, this game is the subject of the innocent-looking yet unresolved Aanderaa--Karp--Rosenberg Conjecture, which has been open since the 70s.

\begin{con}\label{conj.AKR}
Every nontrivial monotone graph property is elusive.
\end{con}

Nontrivial means that it is neither the tautologically true nor the tautologically false property, and monotone means that it is invariant under adding further edges. Elusive properties for infinite vertex sets have been first studied by Csernák and Soukup \cite{CSERNAK_SOUKUP_2023}. Among many other interesting results, they showed that the Aanderaa--Karp--Rosenberg Conjecture fails for infinite vertex sets. Their generalization of the original problem allows the players to play transfinitely.

\subsubsection*{\textbf{1.2. Results}}

We study the case when $V=\nat$ and the game is \emph{defined} to terminate after infinitely many turns. (Thus the game is infinite but not transfinite.) We say that a property $\cals$ is \textbf{$\infty$-elusive} if Seeker does not have a winning strategy in the associated infinite game $G(\cals)$. For a formal definition, see Section~\ref{s.prelim}. While it is clear from the definitions that elusiveness implies $\infty$-elusiveness if $V=\nat$, \emph{a priori} it is unclear whether the converse holds. We prove that it fails.

\newtheorem*{restated.t.separating_notions}{Theorem \ref{t.separating_notions}}
\begin{restated.t.separating_notions}
There exists a graph property $\cals_0$ that is $\infty$-elusive but not elusive.
\end{restated.t.separating_notions}

In Section~\ref{s.elusive}, we prove that connectedness is elusive for $V=\nat$ (see Theorem~\ref{t.connectedness} and Remark~\ref{r.connectedness_transfinite}), which strengthens \cite[Thm~4.16]{CSERNAK_SOUKUP_2023}. We also prove that bipartiteness is $\infty$-elusive, which settles Problem~(1)/(iii) from \cite[Sec~3]{CSERNAK_SOUKUP_2023} for infinite but not transfinite games on $\nat$. In Theorem~\ref{t.independent_edges}, we answer a question of Csernák and Soukup asked via personal communication. We also investigate determinacy and obtain partial answers to Problem~(2) from \cite[Sec~3]{CSERNAK_SOUKUP_2023}.

Since non-$\infty$-elusiveness is strictly stronger than non-elusiveness by Theorem~\ref{t.separating_notions}, the existence of a monotone not $\infty$-elusive property \emph{witnesses the failure} of the Aanderaa--Karp--Rosenberg Conjecture for infinite vertex sets \emph{in a very strong sense}. We provide two examples of such properties.

\subsubsection*{\textbf{1.3. Paper outline}} In Section~2, we provide a detailed introduction to the game outlined above, along with other relevant definitions and basic observations. In Sections~3 and 4, we consider several natural graph properties and prove that they are $\infty$-elusive / not $\infty$-elusive. Section~5 is devoted to Theorem~\ref{t.separating_notions}, that is, separating the notions of elusiveness and $\infty$-elusiveness. In Section~6, we lay the foundations of studying $\infty$-elusive graph properties via descriptive set theory. In particular, we address the question of determinacy. We conclude the paper by listing open problems in Section~7.

For the sake of transparency, we list here all the properties for which we decided whether they are $\infty$-elusive.

\noindent
\begin{minipage}[t]{0.48\textwidth}
\begin{center}
\textbf{$\infty$-elusive}
\end{center}
\vspace{0.1cm}
\begin{itemize}
    \item Connected
    \item Bipartite
    \item Contains a cycle of length $k$\\ (for any fixed $k\geq 3$)
    \item Has diameter $\leq k$\\
    (for any fixed $2\leq k<\infty$)
    \item Contains a vertex of degree $\geq d$\\
    (for any fixed $1\leq d<\infty$)
    \item Property $\cals_0$ (see Section~\ref{s.transfinite})
    \item Sensitive properties\\
    (see Definition~\ref{d.sensitive})
\end{itemize}
\end{minipage}
\hfill 
\begin{minipage}[t]{0.48\textwidth}
\begin{center}
\textbf{Not $\infty$-elusive}
\end{center}
\vspace{0.1cm}
\begin{itemize}
    \item Contains no isolated vertex
    \item Contains $\geq k$ independent edges\\ (for any fixed $2\leq k<\infty$)
    \item Properties invariant under finite changes (see Definition~\ref{d.E_0_invariant})
\end{itemize}
\end{minipage}


\section{Preliminaries}\label{s.prelim}

In this paper, we study properties of countably infinite graphs. For our purposes, it suffices to fix $\nat$ as the common vertex set.

\begin{defi}\label{d.graph_property}
A \textbf{graph property} is an isomorphism-invariant subset $\cals$ of the set of all graphs on the vertex set $\nat$. (Isomorphism-invariant means that if $G$ and $G'$ are isomorphic graphs on $\nat$ and $G\in\cals$, then $G'\in\cals$.)
\end{defi}

\subsubsection*{\textbf{2.1. The formal setting}}\label{ss.setting}

Since we would like to make the set of graphs into a topological space, we need to formalize the notion of a graph as follows. As we only consider graphs on the vertex set $\nat$, for us, a graph $G$ is simply a subset of the edge set $\pairs$ of the complete graph on $\nat$, that is, $G \subseteq \pairs = \{ \{n, k\} : \ n, k \in \nat,\ n \neq k\}$. Also, we can identify each graph $G \subseteq \pairs$ with its characteristic function $\chi_G : \pairs \to \{0, 1\}$, which allows us to view $2^\pairs = \{ f \mid f : \pairs \to \{0,1\}\}$ as \textbf{the space of (countably infinite) graphs}. Here we consider $2 = \{0,1\}$ with the discrete topology, and $2^\pairs$ carries the product of the discrete topologies, with which it is known to be homeomorphic to the usual ternary Cantor set.

Thus it makes sense to talk about Borel graph properties, for example. We proceed by giving a more formal definition of the game outlined in the introduction. Let us fix a graph property $\cals$. Two players, Seeker and Hider play the following game $G(\cals)$, in which the positions are edge-colorings of the complete graph $(\nat,\pairs)$ with 3 colors --- red, white, and green. The game starts with the constant white coloring. In each turn, Seeker picks a white edge from $\pairs$ and Hider colors it green or red. The game terminates after infinitely many steps. Turns are numbered with natural numbers. For any $n\in\nat\cup\{\infty\}$, let $R_n$, $W_n$, and $G_n$ denote the sets of red, white, and green edges respectively after $n$ turns.
In particular, $R_\infty$, $W_\infty$, and $G_\infty$ are the color classes at the end of the game. For any $n\in\nat\cup\{\infty\}$, let $\calp_n=\{G\subseteq \pairs:\ G_n\subseteq G\subseteq G_n\cup W_n\}$. We say that Seeker \textbf{can decide the property $\cals$ after $n$ turns} if either $\calp_n\subseteq\cals$ or $\calp_n\subseteq2^\pairs\setminus\cals$. Seeker wins a run of the game if and only if $W_\infty\neq\emptyset$ and she can decide the property $\cals$ after infinitely many turns. Note that $G(\cals)=G(2^\pairs\setminus\cals)$ for any graph property $\cals$.

\begin{defi}\label{d.omega_elusive}
The graph property $\cals$ is \textbf{$\infty$-elusive} if Seeker does not have a winning strategy in $G(\cals)$.
\end{defi}

Following Csernák and Soukup \cite{CSERNAK_SOUKUP_2023}, we say that $\cals$ is \textbf{strongly $\infty$-elusive} if Hider has a winning strategy in $G(\cals)$. To the best of our knowledge, it is unknown whether every $\infty$-elusive property is strongly $\infty$-elusive, that is, whether every game of the form $G(\cals)$ is determined. For partial results, see Section~\ref{s.determinacy_and_descriptive_complexity}.

Recall that while Csernák and Soukup \cite{CSERNAK_SOUKUP_2023} worked in a very general setting, using arbitrary vertex sets and transfinite games, we restrict our attention to the vertex set $\nat$ and games that terminate after infinitely many turns. Let us say a few words about why this variant is worth studying. First, there are a large variety of problems that are challenging without adding further layers of generality. Second, one may argue that requiring the game to end after $|V|$ turns is a more natural generalization of the finite game because this does not allow Seeker to discover $|V|$-many edges before the end of the game. Third, games of infinite but not transfinite length are more natural from the perspective of descriptive set theory, and their richer theory enables us to use powerful tools such as Borel determinacy (see Section~\ref{s.determinacy_and_descriptive_complexity}).

\subsubsection*{\textbf{2.2. Basic observations and definitions}}

\begin{defi}\label{d.monotone}
A graph property $\cals$ is \textbf{monotone} if the following holds: whenever $\cals$ holds for a graph $G$ on $\nat$, it also holds for any graph on $\nat$ obtained by adding edges to $G$. Formally, $\cals$ is monotone if for every $G\in\cals$ and $G'\in 2^\pairs$ we have $G\subseteq G'\implies G'\in\cals$.
\end{defi}

As we have mentioned in the introduction, the Aanderaa--Karp--Rosenberg Conjecture fails for infinite vertex sets. However, one may still expect monotone properties to be somewhat easier to understand in general, as is illustrated by the following remark and Theorem~\ref{t.monotone_borel}, which should be contrasted with Theorem~\ref{t.complete_coanalytic}.

\begin{remark}\label{r.monotone}
If $\cals$ is monotone, then it is simpler to describe when Seeker can decide $\cals$: for any $n\in\nat\cup\{\infty\}$, Seeker can decide $\cals$ after $n$ turns if and only if either $G_n\in\cals$ or $G_n\cup W_n\in 2^\pairs\setminus\cals$.
\end{remark}

\begin{notation}\label{n.notation_list}
Let us introduce some notation and terminology.
\begin{itemize}
    \item[(a)] For any vertex $x\in\nat$ and graph $H\subseteq\pairs$, let $\deg_H(x)$ denote the degree of $x$ in $H$. If we refer to a specific turn of the game and it is understood to which one we refer, then we may drop the additional index and write simply $\deg_G(x)$ (resp.~$\deg_R(x)$, $\deg_W(x)$) instead of $\deg_{G_n}(x)$ (resp.~$\deg_{R_n}(x)$, $\deg_{W_n}(x)$). If it is convenient, we may refer to $\deg_G(x)$ (resp.~$\deg_R(x)$, $\deg_W(x)$) as the \textbf{green degree} (resp.~red degree, white degree) of $x$.
    \item[(b)] For any graph $G$ on $\nat$, note that $\bigcup G$ is the set of vertices covered by $G$. Let $G'$ denote the graph $(\bigcup G,G)$. In other words, $G'$ is derived from $G$ by discarding isolated vertices.
    \item[(c)] Let us fix an enumeration $\pairs=\{e_0,e_1,\ldots\}$, which we use throughout the paper.
    \item[(d)] For simplicity, we write $xy$ for the edge $\{x,y\}\in \pairs$ if it causes no confusion.
    \item[(e)] By a \textbf{green-white path} (resp. cycle, triangle, etc.) we mean a path (resp. graph, cycle, triangle, etc.) all of whose edges are green or white. We use the analogous terms for other (sets of) colors.
\end{itemize}
\end{notation}

\begin{remark}\label{r.minimal_edge_strategy}
Many of Hider's winning strategies have the following general structure. For every $i\in\nat$, Hider follows a (sub)strategy $\sigma_i$ as long as some condition $P_i$ is not satisfied. When $P_i$ is satisfied, Hider starts following $\sigma_{i+1}$. The strategies $\sigma_i$ are designed so that for every $i\in\nat$ if the game runs along $\sigma_i$ after finitely many steps, then Hider wins. Also, if infinitely many of the $P_i$ are satisfied, Hider wins. We call these \textbf{$\infty$-stage} strategies. The sequence of turns during which Hider follows a given $\sigma_i$ are the \textbf{stages}. Stages are numbered with natural numbers. Typical simple instances of $\infty$-stage strategies are the following, which we call \textbf{minimal-edge} strategies. At the beginning of Stage $i$, if $e_{n_i}$ is the least-indexed white edge, then we set the condition $P_i$ to be ``$e_{n_i}$ is colored green or red''. Clearly, Hider wins if she terminates infinitely many stages, hence it suffices to find strategies $\sigma_i$ that are winning provided that $e_{n_i}$ remains white.
\end{remark}

Let us briefly mention two types of properties for which one of the players has a trivial winning strategy in the associated game.

\begin{defi}\label{d.sensitive}
We call the graph property $\cals$ \textbf{sensitive} if there is a graph $G$ on $\nat$ such that either

(A) $G\in\cals$ and $H\notin\cals$ for every graph $H$ that differs from $G$ by a single edge or

(B) $G\notin\cals$ and $H\in\cals$ for every graph $H$ that differs from $G$ by a single edge.

We call $G$ a \textbf{witness} to the sensitivity of $\cals$.
\end{defi}

\begin{remark}\label{r.monotone_sensitive}
Monotone properties are not sensitive except the trivial ``being the complete graph on $\nat$'' property. Examples of sensitive properties include ``being an infinite path on $\nat$'' and ``being a tree on $\nat$'', both of which are witnessed by any graph satisfying them.
\end{remark}

\begin{remark}\label{r.sensitive_elusive}
Sensitive properties are $\infty$-elusive (and even elusive) because any graph $G$ that witnesses the sensitivity gives a simple winning strategy for Hider: she responds green exactly to the edges of $G$. It is easy to check that this strategy is winning for Hider.
\end{remark}

\begin{defi}\label{d.E_0_invariant}
A graph property $\cals$ is \textbf{invariant under finite changes} if for any graphs $G, H\in 2^\pairs$ that differ by finitely many edges we have $G\in\cals\iff H\in\cals$. 

\end{defi}

\begin{remark}\label{r.invariant_non_elusive}
Properties invariant under finite changes are not $\infty$-elusive since any strategy that keeps exactly one edge white is winning for Seeker.
\end{remark}

Although we will not use it explicitly, some readers may find the following perspective interesting.

\begin{remark}\label{r.graph_of_graphs}
Consider the graph $\graphs$ on the vertex set $2^\pairs$ defined by
$$\{G,H\}\in\graphs\iff\text{$G$ and $H$ differ by a single edge}.$$
This is a bipartite graph of cardinality continuum with countably infinite connected components. Note that a graph property $\cals\subseteq 2^\pairs$ is
\begin{itemize}
    \item invariant under finite changes if and only if it is a union of connected components of $\graphs$,
    \item sensitive if and only if either $\graphs|_\cals$ or $\graphs|_{2^\pairs\setminus\cals}$ has an isolated vertex (where $\graphs|_\cala$ denotes the induced subgraph of $\graphs$ on the vertex set $\cala\subseteq 2^\pairs).$
\end{itemize}
\end{remark}

\subsubsection*{\textbf{2.3. Descriptive set theory}}\label{ss.descriptive_set_theory}

In this subsection, we recall well-known facts, definitions, and notation from descriptive set theory. For further background, see \cite{KECHRIS}. Since only Proposition~\ref{p.S_0_not_borel} and Section~\ref{s.determinacy_and_descriptive_complexity} involve descriptive set theory, readers only interested in the purely combinatorial results may skip Subsections~2.3 and 2.4.

Let $A$ be a countable set. Let $A^{<\omega}=\bigcup_{n\in\nat} A^n$, and for any $s\in A^{<\omega}$ let $l(s)$ denote the length of $s$. For any $x\in A^\nat$, let $x|_n=(x(0),\ldots,x(n-1))$. For any $s\in A^{<\omega}$ and $x\in A^\nat\cup A^{<\omega}$, we write $s\subseteq x$ if $x$ extends $s$ as a function. For any $s\in A^{<\omega}$ and $a\in A$, let $s^{\frown}a=s\cup\{(l(s),a)\}$.

For any countably infinite set $B$, the set $A^B$ of all functions from $B$ to $A$ naturally carries the product of the discrete topologies, with which it is homeomorphic to the Cantor set $2^\nat$ if $A$ is finite and to the Baire space $\nat^\nat$ if $A$ is infinite. Both are Polish spaces and in either case, for any fixed enumeration $\{b_0,b_1,\ldots\}$ of $B$, clopen sets of the form
$$N_s=\{x\in A^B:\ \forall i<l(s)\ (x(b_i)=s(i))\}$$
with $s\in A^{<\omega}$ constitute a basis in $A^B$.

A \textbf{tree} on the set $A$ is a set $T\subseteq A^{<\omega}$ that is closed under taking initial segments. A tree $T\subseteq A^{<\omega}$ is \textbf{pruned} if for every $s\in T$ there is $a\in A$ such that $s^{\frown}a\in T$. The \textbf{body} of a tree $T$ is the set $[T]=\{x\in A^\nat:\ \forall n\in\nat\ (x|_n\in T)\}$ of all infinite branches of $T$. The body of any tree is a closed subset of $A^\nat$. In particular, it is a Polish space with the subspace topology. Note that $\trees=\{T\subseteq A^{<\omega}:\ T \text{ is a tree}\}$ is a closed subspace of $2^{A^{<\omega}}$, hence it is Polish.

We can describe 2-player infinite games with trees as follows. Let $A$ be the set of all possible moves during the game. We call the elements of $A^{<\omega}$ the \textbf{positions} of the game. The rules of the game can be represented by a pruned tree $T\subseteq A^{<\omega}$, which consists of the \textbf{legal positions} --- positions that can be reached following the rules. Then $[T]$ is the set of all possible \textbf{runs} of the game. To complete the definition of the game, we need to specify a \textbf{payoff set} $W\subseteq [T]$. We say that Player I wins the run $x\in [T]$ if and only if $x\in W$. Otherwise, Player II wins. We denote this game by $G(T,W)$. If the rules are understood, we may write $G(W)$. For a family $\Gamma$ of sets in Polish spaces, we call $G(W)$ a \textbf{$\Gamma$-game} if $W\in\Gamma([T])$. For example, $G(W)$ is a Borel game if $W$ is a Borel set in $[T]$.

For any $n\in\nat^+$, a subset $A$ of a Polish space $X$ is
\begin{itemize}
    \item $\mathbf{\Sigma^0_1}$ if it is open,
    \item $\mathbf{\Pi^0_n}$ if its complement is $\mathbf{\Sigma^0_n}$,
    \item $\mathbf{\Sigma^0_{n+1}}$ if it is a countable union of $\mathbf{\Pi^0_n}$ sets,
    \item $\mathbf{\Delta^0_n}$ if it is $\mathbf{\Sigma^0_n}$ and $\mathbf{\Pi^0_n}$,
    \item \textbf{analytic} if $A=f(Y)$ for some Polish space $Y$ and continuous map $f:Y\to X$,
    \item \textbf{co-analytic} if $X\setminus A$ is analytic,
    \item \textbf{complete analytic (resp.~complete co-analytic)} if for every analytic (resp.~co-analytic) set $B\subseteq\nat^\nat$ there is a continuous map $f:\nat^\nat\to X$ such that $B=f^{-1}(A)$.
\end{itemize}
It is well-known that every Borel set is analytic and co-analytic. Complete analytic sets and complete co-analytic sets are not Borel. A well-known example of a complete co-analytic set is the set of well-founded trees $\wellfounded=\{T\in\trees:\ [T]=\emptyset\}$, viewed as a subset of $\trees$. For technical reasons, we will need that $\wellfounded\setminus\{\emptyset,\{\emptyset\}\}$ is also complete co-analytic, which is witnessed by the continuous map $f:\trees\to\trees, T\mapsto \{\emptyset,(0)\}\cup\{(0,1)^{\frown} s:\ s\in T\}$ since $\wellfounded=f^{-1}(\wellfounded\setminus\{\emptyset,\{\emptyset\}\})$.

\subsubsection*{\textbf{2.4. Spaces of compact sets}}

For a topological space $X$, let $\calk(X)$ denote the set of all nonempty compact subsets of $X$. The set $\calk(X)$ is equipped with the topology generated by sets of the form
$$\{K\in\calk(X):\ K\cap U\neq\emptyset\}\quad\text{ and }\quad \{K\in\calk(X):\ K\subseteq U\}$$
with $U\subseteq X$ open.
This is called the \textbf{Vietoris topology}. It is well-known that $\calk(X)$, which is sometimes called the \textbf{hyperspace} of $X$, inherits many topological properties of $X$. For example, if $X$ is Polish, then $\calk(X)$ is also Polish. For technical reasons, we will need the following simple proposition.

\begin{prop}\label{p.vietoris_subbasis}
Sets of the form
\begin{equation}\label{eq.vietoris_subbasis}
\{K\in\calk(2^\nat):\ K\cap N_s\neq\emptyset\}\quad\text{ and }\quad\{K\in\calk(2^\nat):\ K\cap N_s=\emptyset\}\tag{$\star$}
\end{equation}
with $s\in 2^{<\omega}$ constitute a subbasis for the Vietoris topology on $\calk(2^\nat)$.
\end{prop}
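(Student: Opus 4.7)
The plan is to prove both inclusions of topologies. Throughout, write $\calv$ for the topology generated by the subbasis $(\star)$, and recall that the Vietoris topology is generated by the subbasis consisting of $U^+=\{K:K\cap U\neq\emptyset\}$ and $U^-=\{K:K\subseteq U\}$ with $U\subseteq 2^\nat$ open.

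First I would check the easy direction: every set in $(\star)$ is Vietoris open. Sets of the form $\{K:K\cap N_s\neq\emptyset\}$ are Vietoris subbasic since $N_s$ is open. For sets of the form $\{K:K\cap N_s=\emptyset\}$, use that $N_s$ is clopen in $2^\nat$; then $2^\nat\setminus N_s$ is open and $\{K:K\cap N_s=\emptyset\}=\{K:K\subseteq 2^\nat\setminus N_s\}$ is a Vietoris subbasic set of the second type.

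Next I would show that every Vietoris subbasic set lies in $\calv$. For the first type, given $U\subseteq 2^\nat$ open, write $U=\bigcup_i N_{s_i}$ as a union of basic clopen sets; then $\{K:K\cap U\neq\emptyset\}=\bigcup_i\{K:K\cap N_{s_i}\neq\emptyset\}$ is a union of sets from $(\star)$, hence in $\calv$. The main step is the second type $U^-$: fix $K_0\in U^-$ and find a neighborhood of $K_0$ inside $U^-$ belonging to $\calv$. Here I would use compactness of $K_0$ together with zero-dimensionality of $2^\nat$: since $K_0\subseteq U=\bigcup_j N_{s_j}$, some finite subfamily covers $K_0$, say $K_0\subseteq N_{s_1}\cup\cdots\cup N_{s_n}\subseteq U$. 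By splitting cylinders into longer ones if necessary, I may arrange that $\{N_{s_1},\ldots,N_{s_n}\}$ extends to a finite clopen partition $2^\nat=N_{s_1}\sqcup\cdots\sqcup N_{s_n}\sqcup N_{t_1}\sqcup\cdots\sqcup N_{t_m}$. Then
\[
\bigcap_{k=1}^{m}\{K\in\calk(2^\nat):K\cap N_{t_k}=\emptyset\}\ =\ \{K:K\subseteq N_{s_1}\cup\cdots\cup N_{s_n}\}\ \subseteq\ U^{-},
\]
and the left-hand side is a finite intersection of sets from $(\star)$ containing $K_0$, hence a $\calv$-neighborhood of $K_0$ inside $U^-$.

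The only nontrivial point is producing the finite clopen refinement; this is a standard consequence of the fact that any finite collection of basic clopen sets $N_{s_1},\ldots,N_{s_n}$ can be rewritten, by lengthening each $s_i$ to the common length $\max_i l(s_i)$, as a disjoint union of basic clopen sets of fixed length, which partition $2^\nat$ together with the remaining cylinders of that length. Once this refinement is in place, the argument is immediate. Everything else is a routine manipulation of the Vietoris subbasis.
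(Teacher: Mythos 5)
Your proof is correct, and its overall structure matches the paper's: the easy direction follows from clopenness of the cylinders, and the hard direction is a compactness argument producing a finite intersection of sets of the second kind in $(\star)$. The one place you genuinely diverge is in handling the sets $\{K:\ K\subseteq V\}$ with $V$ open: the paper fixes $L\subseteq V$ and covers the compact set $2^\nat\setminus V$ by finitely many cylinders contained in the open set $2^\nat\setminus L$, so that $L\in\bigcap_i\{K:\ K\cap N_{t_i}=\emptyset\}\subseteq\{K:\ K\subseteq V\}$ with no refinement step; you instead cover $K_0$ itself by finitely many cylinders inside $U$ and then pass to a common-length clopen partition of $2^\nat$, so that containment in the finite union becomes disjointness from the complementary cylinders. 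Both arguments are valid and of comparable length; the paper's version avoids the partition/refinement step, while yours only ever invokes compactness of $K_0$ and makes the use of zero-dimensionality explicit. (You also write $\{K:\ K\cap U\neq\emptyset\}$ as a union over all cylinders contained in $U$, where the paper simply picks one cylinder $N_{t_0}\subseteq U$ meeting the given $L$; this difference is immaterial.)
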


\begin{proof}
Clearly, these sets are open. To check that they generate the topology fix any open sets $U, V\subseteq X$ and $L\in\{K\in\calk(X):\ (K\cap U\neq\emptyset)\land (K\subseteq V)\}$. Since sets of the form $N_s$ with $s\in 2^{<\omega}$ form a basis in $2^\nat$, there is $n\in\nat$ and $t_0,\ldots,t_n\in 2^{<\omega}$ such that $N_{t_0}\subseteq U$ and $N_{t_0}\cap L\neq\emptyset$ and $2^\nat\setminus V\subseteq\bigcup_{i=1}^{n} N_{t_i}\subseteq 2^\nat\setminus L$. Thus
$$L\in\{K\in\calk(2^\nat):\ K\cap N_{t_0}\neq\emptyset\}\subseteq \{K\in\calk(2^\nat):\ K\cap U\neq\emptyset\}$$
and
$$L\in\bigcap_{i=1}^n \{K\in\calk(2^\nat):\ K\cap N_{t_i}=\emptyset\}\subseteq\{K\in\calk(2^\nat)\ K\subseteq V\},$$
which shows that the topology generated by sets of the form (\ref{eq.vietoris_subbasis}) is not weaker than the Vietoris topology.
\end{proof}

\section{\texorpdfstring{$\infty$}{omega}-elusive properties}\label{s.elusive}

In this section, we prove the $\infty$-elusiveness of several natural graph properties. As a warm-up, we start with a theorem that admits a simple proof via a minimal-edge strategy.

\begin{theorem}\label{t.k_cycle}
The property ``$G$ contains a cycle of length $k$'' is $\infty$-elusive for every $k\geq 3$.
\end{theorem}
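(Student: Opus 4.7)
My plan is to apply a minimal-edge substrategy (Remark~\ref{r.minimal_edge_strategy}). At the start of stage $i$ let $e_{n_i}=\{a_i,b_i\}$ be the least white edge, and let Hider choose $k-2$ ``fresh'' vertices $v_1^{(i)},\dots,v_{k-2}^{(i)}$: pairwise distinct, different from $a_i,b_i$, not incident to any already colored edge, and distinct from every $v_{\ell'}^{(j)}$ chosen in earlier stages $j<i$. Since $\nat$ is infinite and only finitely many vertices have been ``used'' so far, such a choice is always possible. Write $Q_i$ for the length-$(k-1)$ path $a_i-v_1^{(i)}-\cdots-v_{k-2}^{(i)}-b_i$ and $V_i$ for its vertex set. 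Hider's substrategy $\sigma_i$ colors Seeker's move green if it is an edge of $Q_i$ and red otherwise; in particular $e_{n_i}$ (not on $Q_i$) is colored red when Seeker plays it, which ends the stage.

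By Remark~\ref{r.minimal_edge_strategy}, it suffices to verify that Hider wins whenever some stage $i^*$ lasts forever. Freshness guarantees that every edge of $Q_{i^*}$ was still white at the start of stage $i^*$, so its final color is green or white; combined with $e_{n_{i^*}}\in W_\infty$, the edges of $Q_{i^*}$ together with $e_{n_{i^*}}$ form a $k$-cycle in $G_\infty\cup W_\infty$, and of course $W_\infty\neq\emptyset$.

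The heart of the argument — and what I expect to be the main obstacle — is to show that $G_\infty$ contains no $k$-cycle. Every green edge lies on some $Q_i$, so $G_\infty\subseteq H:=\bigcup_i E(Q_i)$. Suppose for contradiction that $C\subseteq G_\infty$ is a $k$-cycle, and let $M$ be the largest index $i$ for which $C\cap E(Q_i)\neq\emptyset$. The crucial consequence of the strong freshness condition is that the interior vertices of $Q_M$ lie outside $V_j$ for every $j<M$; hence $V_j\cap V_M\subseteq\{a_M,b_M\}$. In particular $C$ can switch between $Q_M$ and any other path $Q_j$ (with $j<M$) only at $a_M$ or $b_M$, so each maximal run of $C$ inside $Q_M$ is a sub-path of $Q_M$ with both endpoints in $\{a_M,b_M\}$. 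Every such sub-path is either trivial or the entire $Q_M$. Simplicity of $C$ together with the fact that $C$ uses $Q_M$ forces exactly one whole copy of $Q_M$ to appear, contributing $k-1$ edges. The remaining $|C|-(k-1)=1$ edge must join $a_M$ to $b_M$, so it equals the edge $e_{n_M}$ — which by construction is red (if stage $M$ ended) or white (if $M=i^*$), and in either case not green. This contradicts $C\subseteq G_\infty$, completing the proof.
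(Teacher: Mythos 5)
Your proposal is correct and takes essentially the same route as the paper: a minimal-edge strategy in which Hider builds a green-white path of length $k-1$ through fresh vertices between the endpoints of the least-indexed white edge and answers red to everything else, so that $G_\infty\cup W_\infty$ contains a $k$-cycle while $G_\infty$ does not. Your maximal-stage analysis of a hypothetical green $k$-cycle is just a more detailed write-up of the step the paper dispatches in one line (any green cycle created during a stage must contain that stage's entire path but not its chord, hence has length at least $k+1$), with your stronger freshness condition serving only to make that verification cleaner.
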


\begin{proof}
We describe a minimal-edge winning strategy for Hider (recall Remark~\ref{r.minimal_edge_strategy}).

At the beginning of Stage $n$, let $e_{i_n}=uv$ be the least-indexed white edge. Hider picks $k-2$ vertices $u_0,\ldots,u_{k-3}$ so that they are covered by only white edges. As long as Seeker does not play $e_{i_n}$, Hider colors an edge green if and only if it belongs to the path $p=(u,u_0,\ldots,u_{k-3},v)$. If $uv$ is played, she colors it red.

By Remark~\ref{r.minimal_edge_strategy}, it suffices to verify that for any $n\in\nat$ Hider wins if Seeker does not terminate Stage $n$ (by playing $e_{i_n}$). Note that any cycle Hider may create during Stage $n$ contains the path $p$. Since she does not color $uv$ green, such a cycle cannot be of length $k$, hence $G_\infty$ does not contain a cycle of length $k$. On the other hand, $\{uu_0,u_0u_1,\ldots,u_{k-4}u_{k-3},u_{k-3}v,vu\}\subseteq G_\infty\cup W_\infty$, thus Seeker cannot decide whether there is a cycle of length $k$ after infinitely many turns. Hider wins.
\end{proof}

\begin{remark}
The property ``$G$ is of girth at most $k$'' is also $\infty$-elusive. Hider can use exactly the same strategy as in the proof of Theorem~\ref{t.k_cycle}.
\end{remark}

Let us continue with a result that is \textbf{not} based on a minimal-edge strategy.

\begin{theorem}\label{t.connectedness}
Connectedness is an $\infty$-elusive property.
\end{theorem}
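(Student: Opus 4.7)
The plan is to exhibit a winning strategy for Hider, showing Seeker cannot have one. Her target is a final coloring with $G_\infty$ disconnected and $G_\infty\cup W_\infty$ connected; such a configuration automatically forces $W_\infty\neq\emptyset$ and leaves $\cals$ undecided.

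I would design Hider's strategy as an $\infty$-stage strategy in the minimal-edge style of Remark~\ref{r.minimal_edge_strategy}. At the start of stage $n$, Hider considers the minimal-indexed currently white edge $e_{i_n}=x_ny_n$ and picks a bipartition $\nat=A_n\sqcup B_n$ into two infinite parts with $x_n\in A_n$ and $y_n\in B_n$, chosen to be consistent with every previously colored edge (past green edges within a single side, past red edges across). During stage $n$, whenever Seeker plays an edge $uv$, Hider colors it green if $u,v$ lie on the same side of $(A_n,B_n)$ and red if they lie on opposite sides; Seeker's play of $e_{i_n}$ itself is therefore colored red, ending the stage.

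Both exhaustive outcomes then give Hider a win. If infinitely many stages terminate, then every $e_i$ eventually appears as the minimal white edge and gets played, so $W_\infty=\emptyset$ and Seeker loses by the winning condition. If some stage $n$ lasts forever, then $e_{i_n}$ stays white throughout the remainder of the game: all green edges lie within one side of $(A_n,B_n)$, so $G_\infty$ is disconnected; within each side every edge is green or white, so the complete graphs on $A_n$ and on $B_n$ both sit inside $G_\infty\cup W_\infty$; and the white edge $e_{i_n}$ is a non-red across-edge, so $G_\infty\cup W_\infty$ is connected.

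The main obstacle, which I expect to carry the technical weight of the proof, is showing that such a partition $(A_n,B_n)$ can always be chosen. Contracting each green-connected component to a single vertex, the consistency constraint on a candidate bipartition becomes bipartiteness of the auxiliary constraint graph whose edges are the past red edges; a routine case analysis of Seeker's move shows Hider can always pick a color preserving this bipartiteness. The subtler requirement is that the endpoints $x_n,y_n$ of the new minimal white edge lie in different green-components (otherwise no consistent bipartition separates them). Maintaining this forces Hider to occasionally refuse to green-connect vertices that may later serve as endpoints of a minimal white bridge, compensating by coloring the offending edge red. The abundance of untouched vertices at every finite turn — always cofinitely many — provides the slack needed to balance the two sides, to assign fresh labels flexibly, and to keep the constraint graph amenable across stage transitions.
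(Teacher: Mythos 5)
Your outline defers to ``routine case analysis'' exactly the step on which the whole strategy stands or falls, and that step is genuinely problematic: against the strategy as you describe it, Seeker can make the required separating bipartition impossible. First, the green-component obstruction you flag is not avoidable by a local fix. During stage $n$ Seeker plays $uz$ and $zv$ with $u,z,v$ all on one side of $(A_n,B_n)$, where $uv$ is a white edge of small index; your rule colors both green, so $u$ and $v$ become green-connected while $uv$ stays white. Seeker then plays the remaining white edges of smaller index (including $e_{i_n}$, ending the stage), so $uv$ is the next minimal white edge and no bipartition consistent with the green edges separates its endpoints. Second, and unaddressed in your sketch, red edges carry parity constraints: if Seeker plays $uw$ and $wv$ with $w$ on the opposite side, your rule colors both red, and then \emph{every} consistent bipartition puts $u$ and $v$ on the same side; again Seeker can arrange for $uv$ to become the minimal white edge. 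In either situation the new stage cannot have its minimal white edge crossing the cut, and Seeker then wins outright: she plays every remaining edge except $uv$, all crossing edges get colored red, so $W_\infty=\{uv\}\neq\emptyset$ and $G_\infty\cup W_\infty$ is genuinely disconnected, which Seeker correctly decides. Your proposed repair --- ``refuse to green-connect vertices that may later serve as endpoints of a minimal white bridge'' --- is not a well-defined rule, since any currently white edge may later become minimal, so Hider would have to protect all white pairs simultaneously; and refusing means answering red inside a side, which is precisely what creates the even-red-path constraints of the second failure mode. So the invariant you would need (for every white edge, some consistent bipartition separates its endpoints) is a nontrivial simultaneous parity condition, and no argument is given that Hider can maintain it.

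It is also worth noting that your target --- forcing $G_\infty$ disconnected while $G_\infty\cup W_\infty$ is connected --- is stronger than what Hider needs, and the paper's proof avoids it. There Hider uses a single, non-staged strategy: for every $k$ she maintains an increasing green-white path from $0$ to $k$ (a ``good path''), answering green exactly when answering red would destroy all good paths to the larger endpoint of the played edge. This keeps $G_\infty\cup W_\infty$ connected, and two further combinatorial claims show that if $G_\infty$ ends up connected then necessarily $W_\infty=\emptyset$, so Seeker loses in either case. In particular Hider never has to realize a global cut, which is exactly what makes the separation problem in your approach disappear; if you want to salvage the stage-based cut-maintenance idea, the missing content is a proof that the separability invariant can be preserved against all of Seeker's wedge moves, and at present that is an open gap in your argument rather than a technicality.
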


\begin{proof}
At any point of the game, for any $k\in\nat$, we say that a path is \emph{a good path to $k$} if it is an increasing finite sequence $0=v_0<v_1<\ldots<v_l=k$ of vertices which is also a green-white path.

We describe a winning strategy for Hider. At any turn of the game, if Seeker plays $nk$ with $n<k$, then Hider responds \emph{green} if and only if the answer \emph{red} would ruin all good paths to $k$. Assume Hider follows this strategy. We verify that she wins.

\textbf{Claim 1.} There exists a good path to every $m\in\nat$ in every turn.

At the beginning, there is a good path to every $m\in\nat$. Now assume that there is a good path to every $m\in\nat$ and Seeker plays $nk$ next with $n<k$. If Hider colors it green, we are done. If she colors it red and she ruins a good path $(v_0,\ldots,v_l)$ by doing so, then $n=v_i,$ and $k=v_{i+1}$ for some $i<l$, and we can replace $(v_0,\ldots,v_{i+1})$ by some good path to $k$, which exists because Hider colored $nk$ red. Thus a good path to $v_l$ still exists, which proves the claim.

\textbf{Claim 2.} It cannot happen in any turn that for some $n<k<m$ both $nm$ and $km$ are green.

By Claim 1, there are good paths to both $n$ and $k$ in every turn. Thus after coloring either of the edges $nm$ and $km$ green, Hider will not color the other one green, which proves the claim.

\textbf{Claim 3.} It cannot happen in any turn that for some $n<k$ the edge $nk$ is white and there is a green path between $n$ and $k$.

Suppose the contrary: for some $n<k$ the edge $nk$ is white and there is a green path $(v_0,v_1,\ldots,v_l)$ with $v_0=n$ and $v_l=k$. If we had $v_i<k$ for each $i<l$, then Hider would not have colored the edge $v_{l-1}k$ green since there were good paths to $k$ (through $nk$) not containing $v_{l-1}k$, this is a contradiction. On the other hand, if there is some $i<l$ such that $v_i>k$, then the existence of the largest $v_i$ together with its neighbors contradicts Claim 2. This proves the claim.

It follows from Claim 1 that even at the end of the game there is a good path to every $m\in\nat$: for any $m\in\nat$, there are only finitely many edges between the vertices $0,1,\ldots,m$, hence after finitely many turns Hider cannot ruin more good paths to $m$.

Thus $G_\infty\cup W_\infty$ is connected. Then Seeker cannot infer that the graph is disconnected. Suppose she infers that the graph is connected, which is, by Remark~\ref{r.monotone}, equivalent to $G_\infty$ being connected. Then, by Claim 3, we must have $W_\infty=\emptyset$, therefore Hider wins.
\end{proof}

\begin{remark}\label{r.connectedness_transfinite}
Observe that the above proof works even if the game is transfinite. (Claim 1 holds in limit turns by the paragraph before the last.) Thus Theorem~\ref{t.connectedness} strengthens Theorem~4.16 in \cite{CSERNAK_SOUKUP_2023} and settles (iv) of Problem (1) from Section 3 in \cite{CSERNAK_SOUKUP_2023} for countably infinite vertex sets.
\end{remark}

\begin{theorem}\label{t.bipartite}
Being bipartite is an $\infty$-elusive property.
\end{theorem}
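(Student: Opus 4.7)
The plan is to exhibit an $\infty$-stage minimal-edge strategy for Hider (in the sense of Remark~\ref{r.minimal_edge_strategy}) that preserves the invariant ``the green subgraph is bipartite'' while, whenever a stage never terminates, forcing an odd cycle into $G_\infty\cup W_\infty$. By Remark~\ref{r.minimal_edge_strategy}, this reduces the problem to constructing, for each stage $n$, a sub-strategy $\sigma_n$ that is winning under the assumption that the minimal white edge $u_nv_n$ at the start of stage $n$ is never played.

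At the start of stage $n$, Hider will first pick a vertex $w_n\in\nat\setminus\{u_n,v_n\}$ that is not incident to any previously colored edge (possible since only finitely many edges have been colored so far), and then extend the canonical $2$-coloring of every connected component of the current green subgraph to a bipartition $(A_n,B_n)$ of $\nat$, chosen whenever possible so that $u_n,v_n\in A_n$ and $w_n\in B_n$. Throughout the stage she will color the edge $u_nv_n$ red (a move which terminates the stage) and every other played edge $xy$ green iff its endpoints lie in different parts of $(A_n,B_n)$. Because only between-part edges ever become green and previously green edges already lie between the parts by construction, the green subgraph stays inside $K_{A_n,B_n}$ and the bipartite invariant is preserved from one stage to the next.

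The hard part will be handling the situation in which no admissible bipartition places $u_n$ and $v_n$ together, which happens exactly when they lie in the same connected component of the current green subgraph with opposite canonical colors. Fortunately, in that case the component already contains a green path of odd length between $u_n$ and $v_n$, so if the stage never terminates then $u_nv_n\in W_\infty$ and this path together with the white edge $u_nv_n$ is an odd cycle in $G_\infty\cup W_\infty$. In the complementary (easier) case one has $u_n,v_n\in A_n$ and $w_n\in B_n$, so both $u_nw_n$ and $w_nv_n$ are between parts and thus lie in $G_\infty\cup W_\infty$ (green if played, white otherwise); together with the white edge $u_nv_n$ they form a triangle. In either case $G_\infty$ is bipartite while $G_\infty\cup W_\infty$ is not, and $u_nv_n\in W_\infty\neq\emptyset$, so Seeker cannot decide bipartiteness and Hider wins.
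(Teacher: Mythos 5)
Your reduction to stages via Remark~\ref{r.minimal_edge_strategy} and your ``easy case'' are fine: if $u_n,v_n$ can be placed in the same part $A_n$ and $w_n\in B_n$, then $u_nw_n,\,w_nv_n$ are never red and together with the white edge $u_nv_n$ they form a triangle in $G_\infty\cup W_\infty$, while $G_\infty\subseteq K_{A_n,B_n}$ stays bipartite. The gap is in the ``hard case'', and it is a parity error at exactly the crux of the problem. If $u_n$ and $v_n$ lie in the same green component with \emph{opposite} canonical colors, then every green path between them has \emph{odd} length, so that path closed up by the white edge $u_nv_n$ is a cycle of \emph{even} length; it witnesses nothing. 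Indeed the situation is the opposite of ``fortunate'': once an odd green path joins the endpoints of a white edge, the position is lost for Hider no matter how she continues. Seeker simply plays every edge except $u_nv_n$; then $W_\infty=\{u_nv_n\}$ and $\calp_\infty=\{G_\infty,\,G_\infty\cup\{u_nv_n\}\}$. If $G_\infty$ is bipartite, then in any proper $2$-coloring of $G_\infty$ the odd green path forces $u_n$ and $v_n$ into different classes, so $G_\infty\cup\{u_nv_n\}$ is bipartite as well; if $G_\infty$ is not bipartite, neither element is. Either way Seeker decides the property with $W_\infty\neq\emptyset$ and wins. (Concretely, under your stage-$n$ rule $G_\infty\cup W_\infty\subseteq K_{A_n,B_n}$ in the hard case, so Seeker just declares ``bipartite''.)

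Moreover, your strategy does nothing to prevent Seeker from manufacturing this configuration: during an earlier stage she can probe fresh vertices to learn the relative parts of the current bipartition, build a green path $x$--$a$--$b$--$y$ whose colors certify that $x$ and $y$ lie in different parts, keep $xy$ white, and then color the lower-indexed edges so that $xy$ becomes the minimal white edge of a later stage --- producing the hard case, which by the previous paragraph is losing for Hider regardless of her subsequent choices of $w_n$ and bipartitions. This is precisely the difficulty the paper's proof is organized around: Hider must never leave a white edge whose endpoints are green-connected by a path of odd length, which is why the paper maintains the much stronger stage invariants (the green graph on its covered vertices is connected and bipartite and \emph{no white edge remains between covered vertices}), enforced through the auxiliary finite games $G_0^*,G_1^*,G_2^*$ and the case analysis in the Appendix. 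A correct proof along your lines would have to show how Hider can \emph{avoid} the hard case forever, not handle it after the fact, and that is essentially the content of the paper's argument.
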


\begin{proof}

Recall the well-known fact that a graph is bipartite if and only if it does not contain a cycle of odd length.

We describe an $\infty$-stage winning strategy for Hider. Here, Stage $n$ will start when some conditions, which imply that all the edges between the vertices $0,\ldots,n-1$ are colored green or red, are satisfied.

Naturally, Hider will play so that she never creates a green cycle of odd length. Then for every green-white cycle of odd length, Seeker must play a white edge of this cycle at some point. This is what \emph{powers} Hider's strategy.

In Turn $0$, Hider colors the very first edge green. Then Stage $0$ begins.

Let Stage $i$ start with Turn $j_i$, and let $n_i$ be the least vertex in $\nat\setminus\bigcup G_{j_i}$ (recall Notation~\ref{n.notation_list} (b) and Subsection~2.1). Assume that the following hold for $j=j_i$ and $n=n_i$.

(1) The graph ${G_j}'$ is connected and bipartite (recall Notation~\ref{n.notation_list} (b)).

(2) Every edge between the vertices of ${G_j}'$ is either green or red.

(3) We have $\{0,\ldots,n-1\}\subseteq \bigcup G_j$.

Stage $i+1$ starts in Turn $j$ if conditions (1)-(3) are satisfied with $n=n_i+1$. Clearly, it suffices to find for every $i\in\nat$ a strategy $\sigma_i$ for Hider such that if Hider follows $\sigma_i$, then either Stage $i$ terminates in finitely many turns or Hider wins (after infinitely many turns). 

Thus we will focus on the (sub)game $Z$ that starts with Turn $j_i$ and terminates (with Hider winning) if Stage $i$ terminates, otherwise the original rules apply. Put $n'=n_i$ and $j=j_i$ for notational convenience. Let $\sigma$ denote the strategy we are looking for.
Let $X$ and $Y$ denote the parts of the bipartite graph ${G_j}'$. Next, we will define three finite games $G_0^*, G_1^*, G_2^*$ on vertex sets $V_0, V_1, V_2$ respectively and we will show that if Hider has a winning strategy in each $G_s^*$, that gives us a suitable $\sigma$. Finally, we will describe a winning strategy for Hider in each $G_s^*$.

Let $V_0=\{n,x,y\}$, $V_1=\{n,k,x,y\}$, and $V_2=\{n,k,l,x,y\}$. For each $s<3$, the positions of the game $G_s^*$ are edge-colorings of the complete graph on $V_s$ with 3 colors: red, white, and green. The starting positions are the following.
\begin{itemize}
    \item $G_0^*$: $xy$ is green, $nx$ and $ny$ are white.
    \item $G_1^*$: $xy$ is green, $nx$ is red, all the other edges are white.
    \item $G_2^*$: $xy$ is green, $nx$ and $ny$ are red, all the other edges are white.
\end{itemize}
The rules are the same in each $G_s^*$. As in the infinite game, Seeker picks white edges and Hider colors them green or red. The game terminates when any of the following three conditions are satisfied.

(I) There is a green cycle of odd length. 

(II) There is no green-white cycle of odd length. 

(III) The green edges form a connected bipartite graph that covers $n$, and there is no white edge between the vertices of this graph.

In $G_s^*$, Hider wins if and only if (III) holds when $G_s^*$ terminates. Now let us assume that for each $s<3$ Hider has a winning strategy $\tau_s$ in $G_s^*$. We describe $\sigma$ and prove that it has the desired properties. Recall that we are at the beginning of Turn $j$. If out of $X$ and $Y$ there are $s$ many ($s=0,1,2$) in which every vertex is joined to $n'$ by a red edge, then Hider will rely on $\tau_s$. (In the case $s=1$, by symmetry, we may assume that every vertex in $X$ is joined to $n'$ by a red edge.) Let us consider the case $s=2$. The cases $s=0,1$ work the same way but they are simpler. Fix any vertices $k', l'\in\nat$ covered only by white edges, and let $W=X\cup Y\cup \{n',k',l'\}$. To every position $p$ in $Z$ we will assign a position $p^*$ in $G_2^*$ as follows. Let $nk$ (resp.~$nl$, $kl$) have the same color as $n'k'$ (resp.~$n'l'$, $k'l'$). If there is a green edge of the form $k'x'$ (resp.~$k'y'$, $l'x'$, $l'y'$) with $x'\in X$ (resp.~$y'\in Y$), then let $kx$ (resp.~$ky$, $lx$, $ly$) be green. If every edge of the form $k'x'$ (resp.~$k'y'$, $l'x'$, $l'y'$) is red, then let $kx$ (resp.~$ky$, $lx$, $ly$) be red. Otherwise, let $kx$ (resp.~$ky$, $lx$, $ly$) be white. 

Now assume we are at position $p$. If Seeker plays $n'k'$ (resp.~$n'l'$, $k'l'$), then Hider responds what she would respond to $nk$ (resp.~$nl$, $kl$) according to the strategy $\tau_2$ at position $p^*$ in $G_2^*$. If Seeker plays an edge of the form $k'x'$ (resp.~$k'y'$, $l'x'$, $l'y'$) with $x'\in X$ (resp.~$y'\in Y$) that is not the last white edge of this form, then Hider responds red. If Seeker plays the last edge of the form $k'x'$ (resp.~$k'y'$, $l'x'$, $l'y'$) with $x'\in X$ (resp.~$y'\in Y$), then Hider responds what she would respond to $kx$ (resp.~$ky$, $lx$, $ly$) according to the strategy $\tau_2$ in position $p^*$ in $G_2^*$. If Seeker plays an edge that has an endpoint not in $W$, then Hider responds red.

Based on the fact that ${G_j}'$ is a connected bipartite graph, we make two observations. First, Hider never creates a green cycle of odd length in the game $Z$ because that would mean she creates one in $G_2^*$, which contradicts that $\tau_2$ is winning. Second, if they are in a position $p$ such that in $p^*$ there is a green-white cycle of odd length on $V_2$, then there is a green-white cycle of odd length on $W$ in position $p$.

Thus as long as they are in a position $p$ in $Z$ such that $p^*$ is not terminal in $G_2^*$, Seeker cannot stop playing edges indefinitely between vertices in $W$ (because she would lose after infinitely many turns). Hence after finitely many turns they reach a position $p$ such that $p^*$ is terminal in $G_2^*$. Since $\tau_2$ is winning for Hider, $p^*$ satisfies condition (III). Now it follows from the previous two paragraphs and condition (III) that position $p$ satisfies conditions (1)-(3) with $n$ replaced by $n'+1$, that is, the game $Z$ terminates and Hider wins.

The case $s=0$ (resp.~$s=1$) can be handled the same way, the only difference being that we do not need an extra vertex (resp.~we need only 1 extra vertex) in addition to $X\cup Y\cup\{n'\}$.

It remains to construct the strategies $\tau_s$. Consider the following condition, which is a weak form of (III).

($\blacksquare$) The green edges form a connected bipartite graph $\wtilde G$ that covers $n$, and the endpoints of every white edge between vertices of $\wtilde G$ are connected by a green path of even length.

We claim that every position that satisfies ($\blacksquare$) is winning for Hider. Indeed, by ($\blacksquare$), every white edge between vertices of $\wtilde G$ is the \emph{only} white edge of a green-white cycle of odd length, hence Seeker has to play all of them. Hider colors each of them red. Conditions (I) and (II) are not satisfied as long as there is a white edge between some vertices of $\wtilde G$. When no more is left, (III) is satisfied, thus Hider wins.

Thus in the following constructions, as long as ($\blacksquare$) is not satisfied, it suffices to check that $\lnot$(I) remains true (that is, there is no green cycle of odd length) when Hider colors an edge green, and $\lnot$(II) remains true (that is, there is a green-white cycle of odd length) when Hider colors an edge red.

The strategy $\tau_0$ is trivial: Hider colors the very first edge green, and ($\blacksquare$) is satisfied immediately.

To construct $\tau_1$ let $A=nk$, $B=nx$, $C=ny$, $D=kx$, $E=ky$, and $F=xy$. In the starting position, $B$ is red, $F$ is green, and all the other edges are white.

\textbf{Case 1.} Seeker plays $A$ first. Then Hider colors it green, $\lnot$(I) remains true. From now on as long as $(\blacksquare)$ is not satisfied and independently of the order in which the edges are played, Hider colors exactly $E$ and the second-played edge from $\{C,D\}$ green. Conditions $\lnot$(I) and $\lnot$(II) remain true until an edge from $\{C, D, E\}$ is colored green, and then ($\blacksquare$) is satisfied.

\textbf{Case 2.} Seeker plays $C$ first. Hider colors it green, ($\blacksquare$) is satisfied.

\textbf{Case 3.} Seeker plays $D$ first. Hider colors it red, $\lnot$(II) remains true. From now on, Hider colors every edge green until $n$ is connected to $y$ by a green path, $\lnot$(I) remains true. Then ($\blacksquare$) is satisfied.

\textbf{Case 4.} Seeker plays $E$ first. Hider colors it green, $\lnot$(I) remains true. From now on, independently of the order in which the edges are played, Hider colors $D$ red (not violating $\lnot$(II)) and the first-played edge from $\{A,C\}$ green. When an edge from $\{A, C\}$ is colored green, ($\blacksquare$) is satisfied.

It remains to construct the winning strategy $\tau_2$ for Hider in $G_2^*$. Let $A=kl$, $B=nk$, $C=nl$, $D=kx$, $E=ky$, $F=lx$, $G=ly$, $H=nx$, $I=ny$, and $J=xy$. In the starting position, $J$ is green, $H$ and $I$ are red, and all the other edges are white. The construction involves checking several cases, which we have tried to make as easy as possible. First, observe that the 12 positions shown below in Figure~\ref{f.winning_positions} all satisfy ($\blacksquare$). (Black lines represent white edges.)

\begin{figure}[ht]
\caption{Winning positions}
\label{f.winning_positions}
\centering
\includegraphics[scale=0.5]{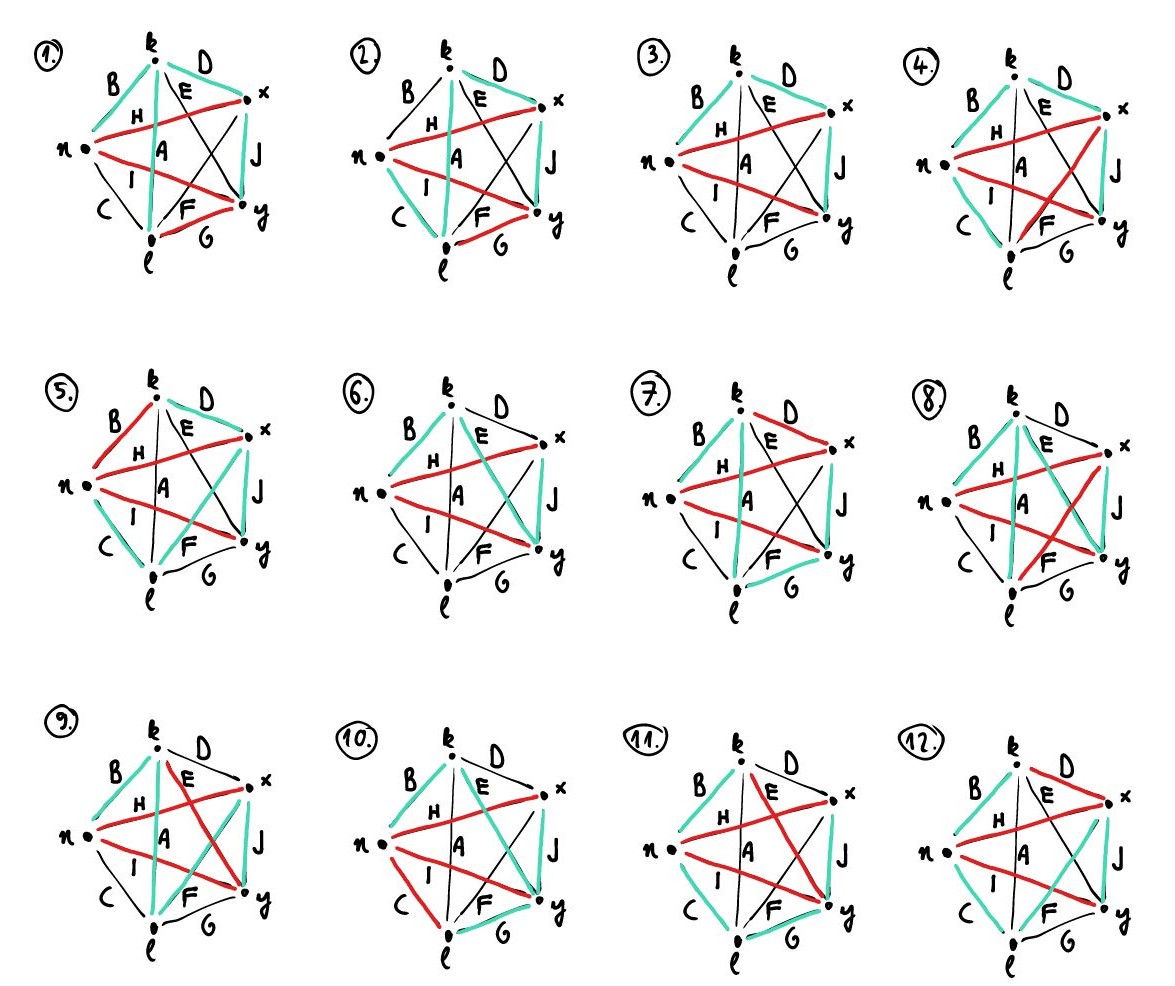}
\end{figure}

Note also that if a position satisfies ($\blacksquare$) and we change the color of some white edges to red, then ($\blacksquare$) remains true. Therefore we will say (somewhat sloppily) that we reached Position $i$ in Figure~\ref{f.winning_positions} if we reached a position that can be obtained from Position $i$ by coloring some white edges red.

The rest of the proof involves checking 21 cases, which are worked out in the Appendix.
\end{proof}

Let us conclude this section by addressing two more graph properties. Hider has an $\infty$-stage winning strategy in both cases.

\begin{theorem}
For every $1\leq d<\infty$, the property ``there exists a vertex of degree at least $d$'' is $\infty$-elusive.
\end{theorem}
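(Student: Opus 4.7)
The plan is to give an $\infty$-stage winning strategy for Hider in the sense of Remark~\ref{r.minimal_edge_strategy}, in fact a minimal-edge one. At the start of Stage $i$, let $e_{n_i} = \{x_i, y_i\}$ be the least-indexed white edge. Hider's substrategy $\sigma_i$ is the following simple rule: when Seeker plays an edge $\{u, v\}$ during Stage $i$, if one endpoint is $x_i$ (say $u = x_i$, $v = w$) then Hider colors the edge green iff both $\deg_G(x_i) < d-1$ and $\deg_G(w) < d-1$ at that moment (and red otherwise); while if $x_i \notin \{u, v\}$ then Hider colors the edge red. The stage-advancement condition $P_i$ is simply that $e_{n_i}$ has been colored. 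A direct check shows that this rule maintains throughout the game the invariant that every vertex has green degree at most $d-1$, hence $G_\infty$ can never contain a vertex of degree $\geq d$.

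Next I would verify that Hider wins in both cases from Remark~\ref{r.minimal_edge_strategy}. If infinitely many $P_i$'s are satisfied, then $n_i \to \infty$ (since each advancement colors the previous least-indexed white edge), so every edge of $\pairs$ is eventually colored, giving $W_\infty = \emptyset$ and thus a Seeker loss. Otherwise Hider stays in some Stage $i^*$ forever, so in particular $e_{n_{i^*}} \in W_\infty$. The key observation here is that only finitely many turns elapse before Stage $i^*$ begins (each earlier stage ended at a finite turn with the coloring of some $e_{n_j}$), so Hider has greened only finitely many edges by then, and consequently the set
\[
U := \{v \in \nat : \deg_G(v) = d - 1 \text{ at the start of Stage } i^*\}
\]
is finite.

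During Stage $i^*$, Hider's rule greens any played edge $x_{i^*}w$ with $w \in \nat \setminus U$ (up to the degree-$(d-1)$ cap on $x_{i^*}$); note that for such $w$, its green degree cannot increase during Stage $i^*$ except through the very greening of $x_{i^*}w$. I would then split into two subcases. If Seeker plays enough such edges to push $\deg_G(x_{i^*})$ up to $d - 1$, then combined with the white edge $e_{n_{i^*}}$ the vertex $x_{i^*}$ has green+white degree $\geq d$. Otherwise Seeker plays only finitely many edges of the form $x_{i^*}w$ with $w \in \nat \setminus U$; she can also play at most $|U|$ edges of the form $x_{i^*}w$ with $w \in U$, which is finite; hence $x_{i^*}$ has finite total played degree, so its white degree is $\omega \geq d$. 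In either subcase $G_\infty \cup W_\infty$ contains a vertex of degree $\geq d$ while $G_\infty$ does not, so Seeker cannot decide the property, and $W_\infty \neq \emptyset$; Hider wins.

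The main obstacle I expect is establishing the finiteness of $|U|$, which relies crucially on the minimal-edge structure of the strategy: each advanced stage must play its own $e_{n_j}$, so finitely many advanced stages means finitely many turns elapsed, and therefore only finitely many greenings could have happened. Once this finiteness is in hand, the rest of the argument reduces to the short case split above on how many edges Seeker plays at $x_{i^*}$ during the final stage.
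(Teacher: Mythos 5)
Your proof is correct, and its engine is the same as the paper's: an $\infty$-stage Hider strategy that greens only edges at a single designated vertex, subject to keeping every green degree at most $d-1$, so that at the end either some vertex has green degree $d-1$ plus an incident white edge, or some vertex has infinite white degree; in either case $G_\infty$ fails the property while $G_\infty\cup W_\infty$ satisfies it, so Seeker cannot decide. The difference is the stage bookkeeping: you run a minimal-edge strategy (the designated vertex is an endpoint of the least-indexed white edge, and a stage ends when that edge is colored), whereas the paper designates $x_n=\min\{x:\ \deg_G(x)<d-1\}$ and ends a stage when $\deg_G(x_n)$ reaches $d-1$. Your choice makes the ``infinitely many stages'' case immediate ($W_\infty=\emptyset$) at the price of the two-way split inside the everlasting stage; the paper instead argues that a stage can fail to terminate only if Seeker starves $x_n$ of plays (leaving it with infinite white degree), and that after infinitely many completed stages every endpoint of a leftover white edge already has green degree exactly $d-1$. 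One small caveat: your claim that $U=\{v:\ \deg_G(v)=d-1 \text{ at the start of Stage } i^*\}$ is finite is false for $d=1$ (there $U$ is cofinite, since no green edge is ever drawn). This is harmless, because for $d=1$ the subcase that uses $|U|$ is vacuous --- the focus vertex trivially has green degree $d-1=0$, so your first subcase always applies --- but you should either note this or treat $d=1$ separately, as the paper does with the all-red strategy, which is exactly what your rule degenerates to in that case.
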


\begin{proof}
If $d=1$, Hider plays red in every turn and wins. Thus, we may assume $d\geq 2$. We describe an $\infty$-stage winning strategy for Hider.

At the beginning of Stage $n$, let $x_n=\min\{x\in\nat:\ \deg_G(x)<d-1\}$. Stage $n$ terminates if $\deg_G(x_n)=d-1$. Clearly, Hider wins if she terminates infinitely many stages without creating a vertex $y$ with $\deg_G(y)\geq d$.

In Stage $n$, Hider plays as follows: she responds green exactly to the edges of the form $ax_n$ with $\deg_G(a)<d-1$. Note that as long as $\deg_G(x_n)<d-1$, Seeker must play an edge of this form at some point since she must play infinitely many edges covering $x_n$, and only finitely many of them are not of this form. Thus, Stage $n$ terminates in finitely many turns.
\end{proof}

\begin{theorem}
For any $2\leq d<\infty$, the property ``$G$ is of diameter at most $d$'' is $\infty$-elusive.
\end{theorem}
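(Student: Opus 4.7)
The plan is to construct an $\infty$-stage winning strategy for Hider based on the minimal-edge framework of Remark~\ref{r.minimal_edge_strategy}. At the beginning of Stage~$n$, let $e_{i_n}=\{a,b\}$ be the least-indexed white edge. Hider's substrategy $\sigma_n$ is the following: whenever Seeker plays an edge $e\ne e_{i_n}$, Hider colors $e$ green, unless doing so would create a green path of length at most $d$ from $a$ to $b$, in which case Hider colors $e$ red. Stage~$n$ terminates when Seeker finally plays $e_{i_n}$ itself.

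If Stage~$n$ runs forever, then $\{a,b\}$ stays white at turn $\infty$, and the invariant ``no green path of length $\le d$ from $a$ to $b$'' forces $a$ and $b$ to be at green-distance strictly greater than $d$, so $G_\infty$ has diameter $>d$. Moreover, for every red edge $\{u,v\}$ coloured during Stage~$n$, the reason for coloring red was the existence of a green path $a\to\cdots\to u$ of length $\ell_1$ and a green path $v\to\cdots\to b$ of length $\ell_2$ with $\ell_1+\ell_2+1\le d$. Concatenating these with the white edge $\{a,b\}$ yields a green-white path $u\to\cdots\to a\to b\to\cdots\to v$ of length $\ell_1+1+\ell_2\le d$, which shows $d_{G_\infty\cup W_\infty}(u,v)\le d$. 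Together with Remark~\ref{r.monotone}, this gives that $G_\infty\cup W_\infty$ has diameter at most $d$, and Hider wins. If infinitely many stages terminate instead, then Hider wins directly by Remark~\ref{r.minimal_edge_strategy} (the minimal-edge argument forces $W_\infty=\emptyset$).

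The main obstacle will be ensuring that the invariant actually holds at the \emph{start} of each stage: although $\{a,b\}$ is white at that moment, the vertices $a,b$ are fixed by $e_{i_n}$ and may already be joined by a short green path inherited from earlier stages, making the substrategy ill-defined. The plan is to circumvent this by having Hider pick, at the start of Stage~$n$, a \emph{fresh} pair $(a_n,b_n)$ of vertices covered only by white edges (possible because only finitely many edges have been coloured). The stage-termination trigger is then adjusted to ``Seeker plays $e_{i_n}$ or $\{a_n,b_n\}$'', so that infinitely many terminations still force $W_\infty=\emptyset$ via the minimal-edge component, while the running-forever analysis proceeds exactly as above with $\{a_n,b_n\}$ in place of $\{a,b\}$. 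Red edges coloured during past stages, whose short-cut edge $\{a_{n'},b_{n'}\}$ is no longer white, will require a separate routing argument exploiting the freshness and localisation of past pairs.
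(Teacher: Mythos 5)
Your single-stage analysis is fine as far as it goes, but the strategy you describe is actually defeated by Seeker, and the failure sits exactly at the two points you leave open or unspecified. First, with the adjusted trigger ``Seeker plays $e_{i_n}$ or $\{a_n,b_n\}$'' it is \emph{not} true that infinitely many terminations force $W_\infty=\emptyset$: Seeker can end every stage by playing $\{a_n,b_n\}$ and never touch the minimal white edge, which then stays white forever. In that regime your analysis (which assumes one final stage running forever) says nothing, and Seeker in fact wins: she fixes a hub vertex $z$ and, at the very start of each stage $m$ --- before any edge at the fresh vertex $b_m$ has been played --- she plays $za_m$, the leftover $zb_{m-1}$, and a few further hub edges; since no green path can reach $b_m$ at that moment, none of these moves completes a green $a_m$--$b_m$ path of length $\le d$, so your rule forces Hider to colour them all green, after which Seeker triggers the next stage with $a_mb_m$. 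Eventually every vertex is green-adjacent to $z$, so the diameter of $G_\infty$ is at most $2\le d$; by monotonicity Seeker can decide the property while the minimal edge is still white, and she wins. Second, if you instead keep only the minimal edge as trigger, Seeker simply plays $\{a_n,b_n\}$ during the stage; your own rule forces Hider to colour it red (it is itself an $a_n$--$b_n$ path of length $1$), which destroys exactly the white short-cut that your routing of red edges relies on. For $d=2$, say, Seeker then plays $a_nw$ followed by $b_nw$ for every $w$: each $b_nw$ must be coloured red, so $b_n$ becomes isolated in $G_\infty\cup W_\infty$, Seeker concludes the diameter exceeds $d$, and the minimal edge is still white. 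So the ``separate routing argument'' you defer is not a technicality: a reactive invariant that only forbids short green paths between one designated pair cannot simultaneously protect the green-white diameter, survive the colouring of that pair, and keep the minimal-edge bookkeeping sound.

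This is precisely where the paper's proof differs. In each stage Hider does not react to a distinguished pair; she commits in advance to a full colouring pattern built on fresh vertices $z_0,\ldots,z_{d-2},w$ (green: $yz_0$, the edges $z_jz_{j+1}$, $z_0w$, all $xa$ and $ya$ with $a$ in the cofinite set $A$, and all edges inside $A\cup\{w\}$; everything else red). This pattern guarantees, at every moment of the stage, both that every pair of vertices is joined by a green-white path of length $\le d$ (so Seeker can never conclude that the diameter exceeds $d$; red edges inherited from earlier stages are harmless because these witnessing paths run through fresh vertices or through the cofinite set $A$) and that $x$ and $z_{d-2}$ are at green distance greater than $d$ (so Seeker can never conclude that the diameter is at most $d$), while the only trigger ending the stage is the minimal white edge itself, so infinitely many terminations genuinely force $W_\infty=\emptyset$. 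Some such globally prescribed pattern, rather than a shortest-path avoidance rule for one white pair, is the missing idea in your proposal.
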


\begin{proof}
We describe a minimal-edge winning strategy for Hider.

Let us assume we are at the beginning of Stage $i$, and the least-indexed white edge is $e_{n_i}=xy$. Let $z_0,\ldots,z_{d-2},w\in\nat\setminus\{x,y\}$ be vertices covered only by white edges, and let $A=\nat\setminus \{x,y,z_0,\ldots,z_{d-2},w\}$. In Stage $i$, Hider responds green exactly to the following:
\begin{itemize}
\item the edge $yz_0$ and edges of the form $z_jz_{j+1}$ with $j\leq d-3$,
\item edges of the form $xa$ and $ya$ with $a\in A$,
\item edges of the form $bc$ with $b,c\in A\cup\{w\}$,
\item the edge $z_0w$.
\end{itemize}

\textbf{Claim 1.} During Stage $i$, there are green-white paths of length at most $d$ between every $u,v\in\nat$.

For $u,v\in A\cup\{w\}$, the statement is clear. For $u,v\in\{x,y,z_0,\ldots,z_{d-2}\}$, it is also clear. Suppose that $u\in \{x,y,z_0,\ldots,z_{d-2}\}$ and $v\in A\cup\{w\}$. If $u=z_k$ for some $k\leq d-2$, then either $(u,z_{k-1},\ldots, z_0,w)$ or $(u,z_{k-1},\ldots, z_0,w,v)$ is a green-white path of length at most $d$ from $u$ to $v$. If $u=x$ or $u=y$, then there exists a vertex $a\in A\setminus\{v\}$ such that $uav$ is a green-white path of length $2$, which proves the claim.

\textbf{Claim 2.} During Stage $i$, there is no green path of length at most $d$ between $x$ and $z_{d-2}$.

Any such green path would have to go through $z_0$, hence it suffices to check that there is no green path of length $2$ between $x$ and $z_0$. The only possible green edges covering $z_0$ are $yz_0$ and $wz_0$. However, $xy$ is white, and $xw$ is red or white, which proves the claim.

By Claims~1 and 2, if the game ends without Stage $i$ terminating, then we have $\diam(G_\infty\cup W_\infty)\leq d$ and $\diam(G_\infty)\geq d+1$, hence Hider wins.
\end{proof}

\section{Not \texorpdfstring{$\infty$}{omega}-elusive properties}\label{s.nonelusive}

In this section, we present nontrivial examples of monotone not $\infty$-elusive properties, which witness a strong failure of the infinite version of the Aanderaa--Karp--Rosenberg Conjecture.

\begin{theorem}\label{t.independent_edges}
For every $2\leq k<\infty$, the property ``$G$ contains $k$ independent edges'' is not $\infty$-elusive.
\end{theorem}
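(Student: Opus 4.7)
The plan is to construct an explicit winning strategy for Seeker in the game $G(\cals)$ for $\cals =$ ``contains $k$ independent edges.'' Fix an edge enumeration $\pairs=\{e_0,e_1,\ldots\}$ as in Notation~\ref{n.notation_list}(c). Seeker plays an adaptive strategy, maintaining a finite ``reserved'' set $R\subseteq\pairs$ of edges she will never play; at each turn she plays the smallest-indexed white edge outside $R$, and then updates $R$ based on Hider's response. The key invariant is that at every turn $n$, either $G_n$ already contains $k$ independent edges, or $G_n\cup R$ has matching number strictly less than $k$. Since every edge not eventually in $R$ gets played, $W_\infty$ equals the final (stabilized) $R$, which will be nonempty; thus by monotonicity of $\cals$ and Remark~\ref{r.monotone}, Seeker can decide the property at the end of the game.

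I would first work out the case $k=2$ in detail, where $|R|=1$ suffices throughout. Initially $R=\{e_0\}$; if Hider always plays red, then $G_\infty=\emptyset$ and $W_\infty=\{e_0\}$, so $G_\infty\cup W_\infty$ has matching number $1<2$ and Seeker decides NO. When Hider reveals her first green edge $\{a,b\}$, Seeker updates $R$ to an unplayed edge $\{a,c\}$ sharing a vertex with $\{a,b\}$, keeping $G_n\cup R$ a two-edge star. If Hider's greens form a two-edge star $\{\{v,a\},\{v,b\}\}$, Seeker switches to $R=\{\{a,b\}\}$ (the third edge of the triangle $\{v,a,b\}$, provided still white; otherwise a fresh $\{v,c\}$ works), so that $G_n\cup R$ is a triangle of matching number $1$. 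If the star at $v$ extends to three or more edges, $R$ becomes a fresh $\{v,d\}$ with $\{v,d\}$ unplayed and $d$ outside the current star, keeping $G_n\cup R$ contained in a star at $v$. Any Hider move that breaks these patterns produces two disjoint green edges, giving $G_n$ matching number $\geq 2$ and a YES decision for Seeker. A short case check shows that $R$ is updated only finitely often and thus stabilizes.

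For general $k\geq 2$, the strategy follows the same template: $R$ is kept bounded in size (depending only on $k$), and updated to maintain a small-matching skeleton around $G_n$. The update rule exploits the structural fact that graphs of matching number less than $k$ admit bounded vertex covers (of size at most $2(k-1)$) and controlled triangle-like components, so that $R$ can always be chosen to extend $G_n$ into a graph of matching number below $k$. The main obstacle I expect is the combinatorial case analysis behind this update rule: one must verify that for every partial green configuration not yet forcing matching number $\geq k$, a bounded $R$ preserving the invariant exists, and that the sequence of updates stabilizes in the limit so that $W_\infty=R_\infty$ is nonempty. The $k=2$ case admits the clean star/triangle dichotomy above, whereas $k\geq 3$ will require a systematic analysis of the richer structures of graphs with small matching number.
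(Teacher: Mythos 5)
Your overall framework --- a reserved set $R$ of never-played edges, deciding at the end via monotonicity, and the observation that any $k$-matching in $G_\infty\cup W_\infty$ already shows up at a finite stage once $R$ has stabilized --- is sound, and your $k=2$ analysis can be made to work. The genuine gap is the case $k\geq 3$, which is the actual content of the theorem and for which you offer only a template and an acknowledged ``main obstacle.'' The central difficulty is that your invariant ``either $G_n$ contains $k$ independent edges or $G_n\cup R$ has matching number $<k$'' is \emph{not} always maintainable by any choice of a nonempty set of white edges: if the green graph ever becomes a disjoint union of $k-1$ triangles (matching number $k-1$), then \emph{every} further edge raises the matching number to $k$, so no valid reserved edge exists, and Seeker is then in an undecidable position ($G_\infty$ lacks a $k$-matching while $G_\infty\cup W_\infty$ has one). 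Your $k=2$ strategy implicitly handles the unique obstruction of this kind (a single triangle) by reserving its third side before it can be completed; for general $k$, Hider can develop many potential triangles (and, more generally, factor-critical configurations in the sense of Gallai--Edmonds) in parallel, while Seeker is obliged to play every edge outside a bounded $R$, so it is far from clear that a bounded reservation can preempt all completions. Likewise, the stabilization of $R$ (finitely many updates) is asserted but not argued beyond $k=2$. These two points --- maintainability of the invariant against all of Hider's small-matching structures, and stabilization --- are exactly the combinatorial heart of the theorem and are missing.

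For comparison, the paper's proof avoids maintaining such an invariant altogether. Its key step is a replacement argument: keeping a single edge $xy$ white and playing everything else, Seeker wins unless Hider produces at most $k-1$ green edges covering all endpoints (other than $x,y$) of the green edges adjacent to $xy$, because otherwise any $k$-matching using $xy$ can be rerouted through an adjacent green edge, so $xy$ is irrelevant and Seeker decides without playing it. This bounds every green degree by $2k-2$; a separate claim forces Hider to keep enlarging the green graph (always preserving a vertex of green degree $1$), and a bounded-degree graph with enough edges must contain $k$ independent edges, so a green $k$-matching eventually appears. To complete your proof you would need an analogous mechanism --- either this irrelevance/replacement trick, or a genuine proof that your bounded-$R$ updates can always block every near-obstruction for arbitrary $k$ and that the updates terminate.
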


\begin{proof}
\textbf{Outline.} The main idea is the following. Seeker can win if Hider creates a vertex covered by at least $2k-1$ green edges. This puts a bound on the green-degrees, hence Seeker can win by forcing Hider to add green edges indefinitely. 

\textbf{Detailed proof.} We may assume that Hider never creates $k$ independent green edges. For any $n\in\nat$, after $n$ turns, we call an edge set $I\subseteq W_n$ \textit{irrelevant} if $G_n\cup I$ does not contain $k$ independent edges. Otherwise, we say $I$ is \textit{relevant}. An edge $e$ is irrelevant (resp.~relevant) if $\{e\}$ is irrelevant (resp.~relevant).

\textbf{Claim 1.} At any point in the game, if an edge $xy$ is white, then we claim that Seeker can force Hider to create at most $k-1$ green edges that cover every endpoint of the green edges adjacent to $xy$, except $x$ and $y$. In particular, if Hider is unable to do so, Seeker wins.

Indeed, Seeker enumerates all white edges except $xy$ and plays them in this order. Assume Hider plays in such a way that she does not create a set of at most $k-1$ green edges that covers every endpoint of the green edges adjacent to $xy$, except $x$ and $y$. At the end of the game, if $G_\infty\cup W_\infty=G_\infty\cup\{xy\}$ contains $k$ independent edges, then so does $G_\infty$ since we can replace $xy$ with one of the adjacent green edges if necessary. Thus Seeker wins, which proves Claim 1.

\textbf{Observation.} At any point in the game, if there is $x\in\nat$ with $\deg_G(x)\geq 2k-1$, then Seeker can win: she applies Claim~1 to any white edge that covers $x$.

\textbf{Claim 2.} For any $n\in\nat$, after $n$ turns, if there is $x\in\nat$ with $\deg_G(x)=1$, then Seeker can force Hider to extend $\bigcup G_n$ by coloring more edges green. Moreover, she can do so while preserving the property that there is $x\in\nat$ with $\deg_G(x)=1$.

Let $y$ be the unique vertex such that $xy\in G_n$.

\textbf{Case 1.} Every edge $xa$ with $a\in G_n\setminus\{y\}$ is red. Then, by Claim~1, Seeker can use any white edge that covers $y$ to force Hider to cover $x$ with a green edge. By doing so, Hider must color an edge that has an endpoint not in $\bigcup G_n$ green. By coloring the first such edge green, Hider extends $\bigcup G_n$ and preserves the property that there is $x\in\nat$ with $\deg_G(x)=1$.

\textbf{Case 2.} There is a white edge of the form $xa$ with $a\in G_n\setminus\{y\}$. Then Seeker plays all white edges between the vertices $(\bigcup G_n)\setminus\{x\}$ and Hider colors them. Then Seeker plays the \emph{relevant} edges of the form $xa$ with $a\in (\bigcup G_n)\setminus \{y\}$ one by one. Note that Hider must color all of them red by the definition of a relevant edge. If no white edge of the form $xa$ with $a\in(\bigcup G_n)\setminus \{y\}$ remains, then we are done by Case~1. Otherwise, a nonempty set $I$ of \emph{irrelevant} white edges of this form remains. Then, by the definition of an irrelevant edge and the fact that any two edges in $I$ are adjacent, the edge set $I$ itself is irrelevant.

Now Seeker enumerates all edges with an endpoint not in $\bigcup G_n$ and plays them in this order. Hider must color some of these edges green since otherwise $G_\infty\cup W_\infty$ could not contain $k$ independent edges by the irrelevance of $I$. By coloring the first such edge green, Hider extends $\bigcup G_n$ and preserves the property that there is $x\in\nat$ with $\deg_G(x)=1$. This proves Claim~2.

Finally, Seeker's strategy is the following. First, she forces Hider to create a green edge. (For example, by enumerating all edges except one and playing them in this order.) Then, by Claim~2, Seeker can force Hider to extend the set of green edges indefinitely. On the other hand, by the observation, Hider must keep $\deg_G(x)\leq 2k-2$ for every $x\in\nat$. It follows from a trivial estimate that a graph with at least $4k^2$ edges and maximum degree at most $2k$ contains $k$ independent edges. Thus, Seeker wins.
\end{proof}

In Theorem~\ref{t.independent_edges} the case $k=2$, which is much simpler than the general case, was proved by Csernák and Soukup, who asked the authors about the general problem via email. 

\begin{theorem}\label{t.isolated}
The property ``there is no isolated vertex'' is not $\infty$-elusive.
\end{theorem}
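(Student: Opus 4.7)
The plan is to describe a natural dovetailing strategy for Seeker and verify that it is winning via a dichotomy on its behavior.

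Since $\cals$ is monotone, Remark~\ref{r.monotone} reduces Seeker's task to producing a run in which $W_\infty\neq\emptyset$ together with either (i) every vertex of $\nat$ being covered by some green edge, or (ii) some vertex of $\nat$ having all its incident edges colored red. I would let Seeker play as follows: at turn $n$, set $v_n=\min(\nat\setminus\bigcup G_n)$, the least vertex not yet covered by a green edge, and have her play the smallest-indexed white edge incident to $v_n$. Once a vertex is covered it remains covered, so the sequence $(v_n)_{n\in\nat}$ is non-decreasing, and hence either $v_n\to\infty$ or $v_n$ stabilizes at some $v^*\in\nat$ from some turn $n_0$ onward. The analysis splits along this dichotomy.

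In the first case, every vertex $v\in\nat$ gets covered after finitely many turns, so $G_\infty\in\cals$ and (i) holds; moreover, Seeker spends only finitely many turns focused on each $v$ and on the (finitely many) vertices strictly below it, so only finitely many edges incident to each $v$ are ever played, leaving $W_\infty\neq\emptyset$. In the second case, from turn $n_0$ on Seeker plays only edges incident to $v^*$, always taking the least-indexed unplayed such edge, and therefore exhausts all edges incident to $v^*$ over the remaining infinitely many turns; each must be colored red (otherwise $v^*$ would become covered, contradicting stabilization), so $v^*$ is isolated in $G_\infty\cup W_\infty$ and (ii) holds. Meanwhile, any edge with both endpoints strictly greater than $v^*$ is never played --- not before $n_0$, when Seeker plays an edge incident to some $v_n\leq v^*$, nor after $n_0$, when she plays only edges incident to $v^*$ --- so $W_\infty\neq\emptyset$.

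The only point that needs some care in the second case is verifying that Seeker truly plays every edge incident to $v^*$ across the $\omega$ turns following $n_0$; this is immediate from her rule of always selecting the least-indexed unplayed such edge, together with the fact that $v^*$ has only countably many incident edges. Overall I expect the argument to be essentially routine once the dichotomy is in place, with no genuine obstacle.
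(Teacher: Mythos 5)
Your dichotomy and your Case 2 analysis are fine, but Case 1 contains a fatal gap: the claim that ``only finitely many edges incident to each $v$ are ever played'' does not follow. An edge $\{v,w\}$ with $w>v$ can be played while Seeker is focused on $w$, because the least-indexed white edge incident to $w$ typically runs \emph{down} to a small vertex; your argument only accounts for turns in which the focus is at $v$ or at vertices below $v$. Worse, the proposed strategy is not merely unproven but actually loses for natural enumerations. Take the enumeration of $\pairs$ ordered by larger endpoint, so that the edges incident to $d$ of smallest index are $\{0,d\},\{1,d\},\ldots,\{d-1,d\}$ in this order, before any $\{d,m\}$ with $m>d$. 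Let Hider answer green to $\{0,1\}$ at the first turn, and thereafter, while the focus is at a vertex $d\ge 2$, answer red to $\{0,d\},\ldots,\{d-2,d\}$ and green to $\{d-1,d\}$. By induction, the focus visits $2,3,4,\ldots$ in order, no vertex beyond the current focus is ever covered prematurely, and during the focus-at-$d$ phase your rule makes Seeker play every edge $\{j,d\}$ with $j<d$. Hence in this run every vertex ends up green-covered (so alternative (ii) never occurs) and \emph{every} edge of $\pairs$ is played, i.e.\ $W_\infty=\emptyset$, so Seeker loses outright. Guaranteeing a leftover white edge is precisely the hard part of the theorem, and the greedy ``cover the least uncovered vertex'' rule can be lured into exhausting all of $\pairs$.

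This is why the paper's proof is considerably more elaborate: Seeker's strategy there is organized around positions in which Hider's responses are \emph{forced} --- for instance, once there is a white edge between two green-covered vertices, Seeker wins by simply never playing it, and this observation (together with a companion claim about a vertex all of whose edges into $\bigcup G_n$ are red) is applied recursively to force Hider to answer red along prescribed edges while two reserved vertices keep an unplayed edge, eventually producing a vertex isolated in $G_\infty\cup W_\infty$ with $W_\infty\neq\emptyset$. Your Case 2 corresponds to the easy half of such an analysis; what is missing is a mechanism preventing Hider from simultaneously covering all vertices and draining all the white edges, and that mechanism cannot be supplied by the strategy as you stated it.
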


\begin{proof}
We describe a winning strategy for Seeker.

\textbf{Observation.} At any point in the game, if there exists a white edge $xy$ such that $x$ and $y$ are covered by green edges, then this is a winning position for Seeker since she no longer needs to play the edge $xy$.

\textbf{Claim.} For any $n\in\nat$, after $n$ turns, if there exists a vertex $x\in \nat\setminus\bigcup G_n$ such that

(A) every edge of the form $xa$ with $a\in \bigcup G_n$ is red, and

(B) for every vertex $y\notin (\bigcup G_n)\cup\{x\}$ there exists $a\in \bigcup G_n$ such that $ya$ is white,

then this is a winning position for Seeker.

Seeker enumerates all edges of the form $xy$ with $y\notin \bigcup G_n$ and plays them in this order. Hider must always respond red by (B) and the observation, hence we get that $x$ is isolated in $G_\infty\cup W_\infty$ and $W_\infty\neq\emptyset$. Seeker wins, which proves the claim.

Seeker starts by picking a vertex $x_0$. She enumerates all the edges covering $x_0$ and plays them one by one until Hider responds green, which must happen since otherwise we get that $x_0$ is isolated in $G_\infty\cup W_\infty$ and $W_\infty\neq\emptyset$.

When Hider responds green for the first time, we introduce the following labeling: let $x_1,\ldots,x_{m-1}$ denote the other endpoints of the red edges, and let $x_n$ denote the other endpoint of the first green edge.

Seeker plays the edges $x_1x_2,\ldots,x_1x_m$ in this order. By the observation, Hider must respond red to $x_1x_2,\ldots,x_1x_{m-1}$. By the claim, she must respond green to $x_1x_m$. Similarly, Seeker plays the edges $x_2x_3,\ldots,x_2x_m$ in this order, and Hider must respond red to each except the last, to which she must respond green. Seeker repeats this procedure for $x_3,\ldots,x_{m-1}$. Thus, we get that between the vertices $x_0,\ldots,x_m$ exactly the edges covering $x_m$ are green, the rest are red. Switch the labels $x_0$ and $x_m$ for convenience.

Let $y$ and $z$ be two vertices covered only by white edges. Seeker plays the edges $yz, yx_0,\ldots,yx_{m-1}$ in this order, and Hider must respond red each time since $yx_m$ is white. Let $x_{m+1},x_{m+2},\ldots$ be an enumeration of the vertex set $\nat\setminus\{y,z,x_0,\ldots,x_m\}$.

We proceed recursively. Assume that for some $k\in\nat$,
\begin{itemize}
\item the edges $x_1x_0,\ldots,x_{m+k}x_0$ are green,
\item every edge between the vertices $y,x_1,\ldots,x_{m+k-1}$ is red,
\item the edges $yz, yx_0, x_{m+k}x_1,\ldots,x_{m+k}x_{m+k-1}$ are red,
\item every other edge is white.
\end{itemize}
Then Seeker plays the edges $x_{m+k+1}x_{m+k},\ldots,x_{m+k+1}x_0$ in this order. Again, Hider must respond red to each except the last by the observation, to which she must respond green by the claim. Now Seeker plays $yx_{m+k}$, and Hider must respond red because $yx_{m+k+1}$ is white.

By repeating this procedure, Seeker forces Hider to color every edge of the form $yx_i$ red without playing any edge covering $z$. Thus, the vertex $y$ is isolated in $G_\infty\cup W_\infty$ and $W_\infty\neq\emptyset$, hence Seeker wins.
\end{proof}

\section{Elusive versus \texorpdfstring{$\infty$}{omega}-elusive}\label{s.transfinite}

In this section, we separate two notions: the notion of an elusive property and that of an $\infty$-elusive property. We do so by giving an example of a graph property $\cals_0$ such that if the game terminates after infinitely many turns, then Hider has a winning strategy, but if they are allowed to play transfinitely, then Seeker has one.

\begin{theorem}\label{t.separating_notions}
There exists a graph property $\cals_0$ that is $\infty$-elusive but not elusive.
\end{theorem}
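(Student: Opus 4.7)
My plan is to exhibit a specific graph property $\cals_0$ and verify both clauses: Hider wins the $\omega$-game $G(\cals_0)$ (so $\cals_0$ is $\infty$-elusive), while Seeker has a winning strategy in the transfinite version of the game (so $\cals_0$ is not elusive). The guiding design principle is that $\cals_0$ should be a ``two-phase'' property: resolvable by a Seeker who can use two consecutive $\omega$-blocks separated by an inspection of the intermediate state at stage $\omega$, but not resolvable within a single $\omega$-pass. A natural candidate is a property of the form ``there exists a vertex $v$ of infinite degree in $G$ such that the induced subgraph on $N(v)$ satisfies an auxiliary $\infty$-elusive condition $P$'' --- for instance $P$ could encode the presence of an infinite matching or a connectivity-style condition inside $N(v)$. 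This is isomorphism-invariant by construction and has essentially $\mathbf{\Sigma}^1_1$-flavored complexity, so it is a priori too rich to be decided by a single $\omega$-probe.

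For the $\infty$-elusiveness clause, Hider's strategy is an ambiguity-preservation (diagonal) argument: at each turn she colors the queried edge so as to maintain the invariant that the current position admits completions both inside and outside $\cals_0$. The combinatorial heart is that, at every finite stage and for each vertex $v$, Hider can simultaneously keep it ambiguous whether $v$ attains infinite green degree and whether $P$ will hold on $N(v)$; with only $\omega$ moves, Seeker cannot probe enough candidate vertices deeply enough in both respects. This invariant is preserved under an arbitrary Seeker query thanks to a finiteness lemma asserting that at least one color choice keeps both ``sides'' of $\cals_0$ alive, a step that leans on the $\infty$-elusiveness of the inner property $P$.

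For the non-elusiveness clause, Seeker's transfinite strategy plays in two $\omega$-blocks. In the first block she probes degrees systematically while carefully reserving a fixed infinite set $E^\star\subseteq\pairs$ of edges she will never touch (this ensures $W_\infty \neq \emptyset$ automatically); at the limit ordinal $\omega$ she reads off the current green graph and either locates a distinguished vertex $v^\star$ of infinite green degree, or, by Remark~\ref{r.monotone}-style reasoning on the degree condition, concludes that no vertex of the eventual graph can have infinite degree, so $\cals_0$ fails. If $v^\star$ exists, in the second $\omega$-block she pivots to verifying $P$ on $N(v^\star)$; because the witness is now fixed, $P$-verification is an ordinary $\omega$-game on the countable set $N(v^\star)$, for which Seeker wins by a suitable strategy built into the choice of $P$.

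The main obstacle I anticipate is the $\infty$-elusiveness argument. Hider has to simultaneously foil Seeker's attempt to identify or rule out \emph{every} candidate witness $v$ while also preventing verification of the inner condition $P$ on each potential $N(v)$. Managing these two interacting invariants across infinitely many candidates requires carefully coordinated bookkeeping for Hider and a compactness-flavored lemma guaranteeing that her local color choice always preserves the global ambiguity. The precise choice of the auxiliary property $P$, together with the form of the degree requirement, will be the delicate design point on which the whole argument hinges.
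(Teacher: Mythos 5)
Your text is a plan rather than a proof, and the places you leave open are exactly where the whole difficulty of the theorem sits. Neither the auxiliary property $P$, nor the precise degree requirement, nor Hider's invariant, nor the ``finiteness/compactness lemma'' that is supposed to preserve ambiguity is specified or proved. Worse, the two clauses of your design contradict each other as stated: you want Hider's $\omega$-game strategy to ``lean on the $\infty$-elusiveness of the inner property $P$'', but you also want Seeker, in the second transfinite block, to win an ordinary $\omega$-game verifying $P$ on the countable set $N(v^\star)$ --- and $\infty$-elusiveness of $P$ on a countably infinite vertex set is precisely the statement that Seeker has no such winning strategy. The real source of the extra power of the transfinite game is different: Seeker can first learn an \emph{infinite} amount of information at stage $\omega$ and then, depending on it, skip a \emph{finite} piece of the graph. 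The paper's construction implements exactly this: an infinite $0$--$1$ pattern (the edges between a distinguished vertex $a$ and an infinite path $X$) determines, via a $2$-coloring $f$ of the ``flip one coordinate'' graph on infinite--co-infinite subsets of $\nat$ (Lemma~\ref{l.pattern}), which of two fixed triangles $Q_0,Q_1$ is relevant; density of both color classes of $f$ lets Hider keep re-choosing the pattern in the $\omega$-game, while single-coordinate sensitivity of $f$ forces Seeker to resolve all $a$--$X$ edges. Your sketch contains no mechanism playing this role.

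Two further gaps are not cosmetic. First, isomorphism-invariance is not ``by construction'' harmless: Hider's ambiguity argument needs that changing a \emph{single} edge of a completed coloring moves the graph out of $\cals_0$, and this can be destroyed by an isomorphism rearranging the witnesses; the paper must prove a rigidity statement (Lemma~\ref{l.nonisomorphic_extensions}) ensuring that graphs near the target coloring have no nontrivial symmetries, and your schematic property ``some vertex of infinite degree whose neighbourhood satisfies $P$'' offers no such control. Second, your transfinite Seeker strategy is underspecified at its key step: to ``conclude that no vertex of the eventual graph can have infinite degree'' at stage $\omega$ while keeping $W_\infty\neq\emptyset$, Seeker must have resolved all but a very sparse set of edges (e.g.\ a matching), but then the second $\omega$-block has essentially nothing left to interrogate inside $N(v^\star)$, and whether she can decide $\cals_0$ reduces again to single-edge sensitivity of the unspecified $P$ --- the same unresolved tension as above. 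As it stands, the proposal does not yield a proof of either direction of the theorem.
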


To prove Theorem~\ref{t.separating_notions} we need the following simple lemma.

\begin{lemma}\label{l.pattern}
Let $Y=\{x\in 2^\nat:\ |\supp x|=|\nat\setminus \supp x|=\omega\}$, where $\supp x$ denotes $\{i\in\nat:\ x(i)=1\}$. There exists a function $f:Y\to \{0,1\}$ such that

(1) both $f^{-1}(0)$ and $f^{-1}(1)$ are dense in $Y$,

(2) for every $x,y\in 2^\nat$ if $|(\supp x)\Delta(\supp y)|=1$, then $f(x)\neq f(y)$.
\end{lemma}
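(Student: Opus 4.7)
The plan is to color $Y$ by parity within classes of the standard equivalence relation $E_0$ on $2^\nat$, where $x\,E_0\,y$ iff $|\supp x \Delta \supp y|<\infty$. A finite modification of an element of $Y$ preserves both infinitude of $\supp$ and of its complement, so every $E_0$-class is either entirely contained in $Y$ or entirely disjoint from $Y$; hence $Y$ is partitioned into countable $E_0$-classes. Using the axiom of choice I would pick a representative $r_C$ from each such class $C$, and define $f:Y\to\{0,1\}$ by
\[
 f(x) \;=\; |\supp x \,\Delta\, \supp r_{[x]}| \bmod 2,
\]
which is well-defined because $x$ and $r_{[x]}$ differ only on a finite set.

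For condition (2), suppose $x,y\in 2^\nat$ satisfy $|(\supp x)\Delta(\supp y)|=1$. A single bit flip preserves $Y$, so $x,y\in Y$, and $x\,E_0\,y$, so they share a representative $r$. Writing $k$ for the unique coordinate at which $x$ and $y$ disagree, the symmetric differences $\supp x \Delta \supp r$ and $\supp y \Delta \supp r$ coincide off $k$ and contain $k$ in exactly one case (depending on $r(k)$), so their cardinalities differ by exactly one, and hence $f(x)\neq f(y)$.

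For condition (1), fix any basic open set $N_s\cap Y$ with $s\in 2^{<\omega}$ and pick some $x_0\in N_s\cap Y$. Flipping any single coordinate $i\geq l(s)$ produces $x_1\in N_s\cap Y$, and by (2) we have $f(x_1)=1-f(x_0)$. Thus both colors appear in $N_s\cap Y$, which gives density of both $f^{-1}(0)$ and $f^{-1}(1)$ in $Y$. I do not anticipate any serious obstacle; the only slightly delicate point is observing that the $E_0$-classes sit inside $Y$ as claimed, so that the parity function is well-defined and each class is itself dense in $Y$, which is what ultimately delivers (1).
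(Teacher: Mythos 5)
Your proof is correct and is essentially the paper's argument: the paper observes that the graph on $Y$ joining points at symmetric difference $1$ has no odd cycles, hence is bipartite and admits a $2$-coloring, and your parity-relative-to-a-chosen-$E_0$-representative function is exactly the explicit realization of that coloring (the components of that graph are the $E_0$-classes inside $Y$, and graph distance equals the size of the symmetric difference). Your density argument --- flip a coordinate beyond $l(s)$ and apply (2) --- is the same as the paper's.
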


\begin{proof}
Consider the graph $G$ on $Y$ defined by
$$\{x,y\}\in G\iff |(\supp x)\Delta(\supp y)|=1.$$
It is easy to check that it does not contain odd cycles, hence it is bipartite and admits a 2-coloring $f:Y \to \{0,1\}$. To see that both $f^{-1}(0)$ and $f^{-1}(1)$ are dense fix a basic clopen set $B\subseteq Y$ and a point $x\in B$. If $(\supp x)\Delta(\supp y)=\{n\}$ for some $y\in 2^\nat$ and large enough $n$, then $f(x)\neq f(y)$ and $x,y\in B$.
\end{proof}

\begin{proof}[Proof of Theorem~\ref{t.separating_notions}]
\textbf{Informal outline.} We construct the property $\cals_0$ to express the following. There are two finite parts $Q_0$, $Q_1$ and an infinite part $X$ of $G$ such that exactly one of the structures of $Q_0$ and $Q_1$ are irrelevant, but that which is irrelevant depends on the structure of $X$. Thus, if Seeker has already discovered $X$, then she does not need to discover one of $Q_0$ and $Q_1$. This does not help her if the game is not transfinite. The main difficulty lies in making such a property isomorphism invariant. 

\textbf{Detailed proof.} First, we define $\cals_0$. Fix a function $f$ that satisfies (1) and (2) of Lemma~\ref{l.pattern}. For convenience, we use colors again: we view edges in $G$ as green and edges in $[\nat]^2\setminus G$ as red. The property $\cals_0$ holds for $G$ if and only if the associated colored graph satisfies each of the following 10 properties (see Figure 2):

(a) It has exactly one vertex $a$ such that $\deg_G(a)=\deg_R(a)=\infty$. Let $A=\{a\}$.
    
(b) It has exactly 6 vertices, say $p_0,\ldots,p_5$ with finite red degree, and $\deg_R(p_i)=i+1$ for each $i\in\{0,\ldots,5\}$. Let $P=\{p_0,p_1\ldots,p_5\}$.
    
(c) Every edge between $A$ and $P$ is green. Every edge between vertices in $P$ is green.
    
(d) There are exactly 6 vertices $q_0,\ldots,q_5$ that are not connected to each of $p_0,\ldots,p_5$ by a green edge, and $p_iq_j$ is green if and only if $i<j$. Let $Q=\{q_0,\ldots,q_5\}$, which is disjoint from $P$ by (c).

(e) All edges between the vertex sets $A$, $Q_0=\{q_0,q_1,q_2\}$, and $Q_1=\{q_3,q_4,q_5\}$ are red.

(f) Let $X=\{x_0,x_1,\ldots\}=\nat\setminus\{a,p_0,\ldots,p_5,q_0,\ldots,q_5\}$. The green edges between $x_0,x_1,\ldots$ form an infinite path that covers every $x_i$. We may assume that $x_i$ denotes the $i$th vertex of this infinite path. (Thus $x_ix_j$ is red if $|i-j|\geq 2$.)

(g) Every edge between $X$ and $P$ is green. (This already follows from (b)+(d).)

(h) Every edge between $X$ and $Q$ is red.

(i) There are infinitely many green and infinitely many red edges between $X$ and $A$. (This follows from (a) and the fact that $\nat\setminus X$ is finite.)

(j) For the characteristic function $g$ of the set $\{i\in\nat:\ x_ia\text{ is green}\}$ the following hold. If $f(g)=0$, then every edge between $q_0,q_1,q_2$ is red. If $f(g)=1$, then every edge between $q_3,q_4,q_5$ is red.

\begin{figure}[ht]
\centering
\caption{The property $\cals_0$}
\label{f.separating_notions}
\includegraphics[scale=0.52]{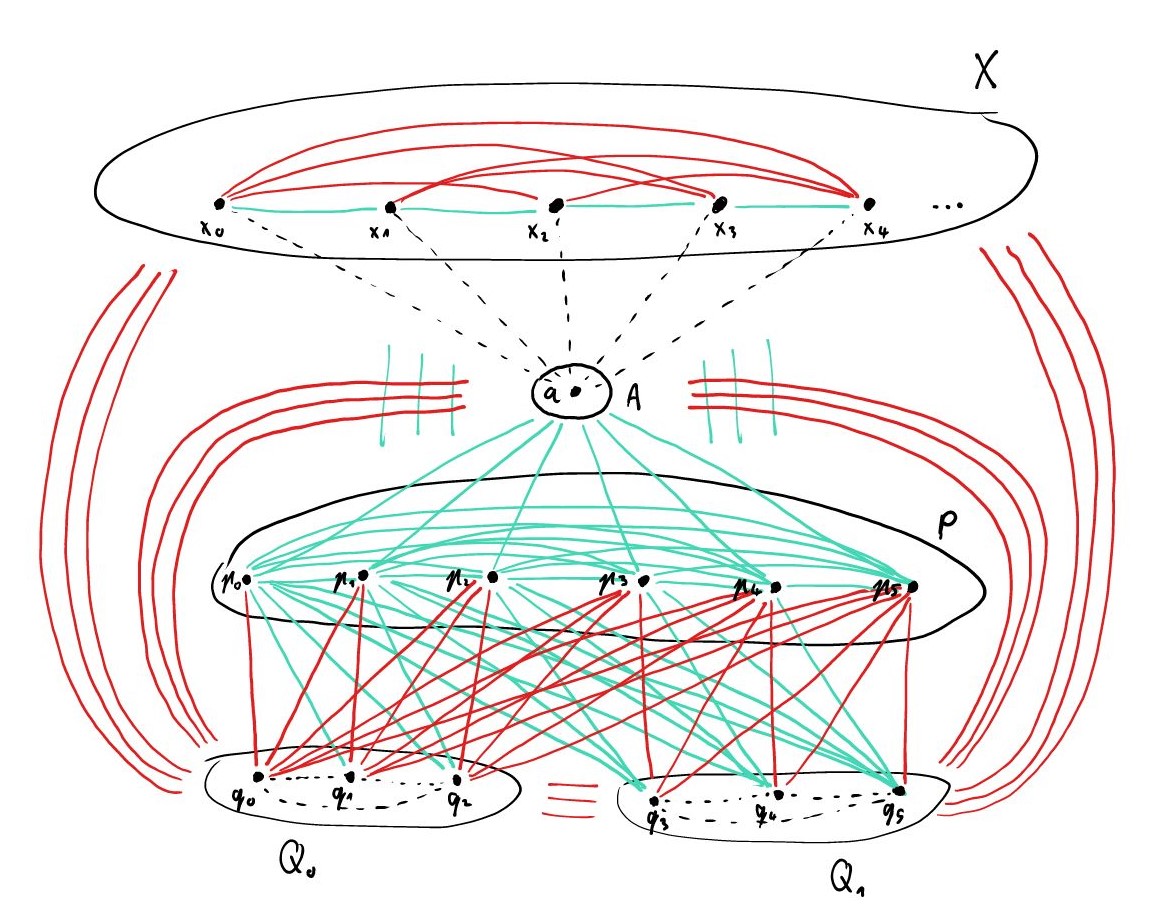}
\end{figure}

\textbf{Observation 1.} If $\cals_0$ holds for $G$, then the vertices $q_0,\ldots,q_5,x_0,x_1\ldots$ have green degree at most 9. On the other hand, $p_0,\ldots,p_5$ have red degree at most 6.

\textbf{(1) The property $\cals_0$ is not elusive.} We describe a transfinite winning strategy for Seeker. Here, in part (1) of the proof, we need to number turns with ordinals. In particular, we write $\omega$ instead of $\infty$. We divide the game into 3 stages.

\textbf{Stage 1.} As long as there is a vertex $x\in\nat$ with $\deg_G(x)+\deg_R(x)<16$, Seeker plays the edge $mn$ whose endpoints are
$$m=\min\{i\in\nat:\ \deg_G(i)+\deg_R(i)<16\},$$
and
$$n=\min\{i\in\nat\setminus\{m\}:\ \deg_G(i)+\deg_R(i)=0\}.$$
Stage 1 terminates when no such $x$ is left, which happens after $\omega$ turns.

\textbf{Observation 2.} At the end of Stage 1, the graph $G_\omega\cup R_\omega$ is a 16-regular infinite tree.

Let
$$B=\{i\in\nat:\ \text{in Stage 1, Seeker has found at least 10 green edges covering } i\}.$$
By Observation 1, if $\cals_0$ holds, then $P\subseteq B\subseteq P\cup A$. Thus, if $|B|<6$ or $|B|>7$, then Seeker wins immediately by concluding that $\cals_0$ fails. We may assume that this is not the case.

\textbf{Stage 2.} Seeker enumerates all edges covering vertices in $B$ and plays them one by one. This takes $\omega$ turns after which Stage~2 terminates. If any of the following fails, then $\cals_0$ fails and Seeker wins immediately.
\begin{itemize}
    \item There are exactly 6 edges in $B$ that are covered by infinitely many green edges and finitely many red edges.
    \item These 6 vertices can be enumerated as $p_0,\ldots,p_5$, where $p_i$ is covered by exactly $i+1$ red edges.
    \item There are exactly 6 vertices that are connected to at least one of the $p_i$ by a red edge.
    \item These 6 vertices can be enumerated as $q_0,\ldots,q_5$, where $q_j$ is connected to $p_i$ by a green edge if and only if $j>i$.
\end{itemize}

We may assume that the above hold. By Observation~2, Seeker can pick white edges $f_0\in\{q_0q_1, q_0q_2,q_1q_2\}$ and $f_1\in\{q_3q_4,q_3q_5,q_4q_5\}$.

\textbf{Stage 3.} Seeker plays every remaining white edge except $f_0$ and $f_1$. Now she can easily decide whether properties (a)-(i) hold. Suppose they hold. For deciding (j), note that since Seeker has played every edge of the form $x_ia$, she needs to play at most one of $f_0$ and $f_1$. Thus, she has won in $\omega\cdot 3+1$ turns.

\textbf{(2) The property $\cals_0$ is $\infty$-elusive.} We describe a winning strategy for Hider in $G(\cals_0)$.

We can fix a bijection between $\nat$ and $V=\{a,p_0,\ldots,p_5,q_0,\ldots,q_5,x_0,x_1,\ldots\}$, thus, for notational simplicity, we may assume that the game is played on the set $V$. Let $A=\{a\}$, $P=\{p_0,\ldots,p_5\}$, $Q=\{q_0,\ldots,q_5\}$, $Q_0=\{q_0,q_1,q_2\}$, $Q_1=\{q_3,q_4,q_5\}$, $X=\{x_0,x_1,\ldots\}$. Hider plays as follows.

Informally, whenever Seeker plays an edge that is green or red in Figure~\ref{f.separating_notions}, Hider responds according to the figure.

\label{partial_coloring} Formally, Hider responds green whenever Seeker plays any of the following edges:
\begin{itemize}
    \item edges between $A$ and $P$,
    \item edges between elements of $P$,
    \item edges of the form $\{p_i,q_j\}$ with $i<j$,
    \item edges between $P$ and $X$,
    \item edges of the form $\{x_i,x_{i+1}\}$ with $i\in\nat$.
\end{itemize}
Hider responds red whenever Seeker plays any of the following edges:
\begin{itemize}
    \item edges between $A$ and $Q$,
    \item edges of the form $\{p_i,q_j\}$ with $i\geq j$,
    \item edges between $Q_0$ and $Q_1$,
    \item edges between $Q$ and $X$,
    \item edges of the form $\{x_i,x_j\}$ with $|i-j|\geq 2$.
\end{itemize}

Regarding the rest of the edges (uncolored in the figure), Hider plays as follows. She chooses a sequence $g\in Y$ (recall Lemma~\ref{l.pattern}). As long as Seeker does not play the last edge in $[Q_0]^2$ or the last edge in $[Q_1]^2$, Hider responds green to the edge $x_ia$ if and only if $g(i)=1$, and she responds red to every edge in $[Q]^2$. 

\textbf{Claim 1.} If Seeker does not play all edges in $[Q_0]^2$ nor does she play all edges in $[Q_1]^2$, then she loses.

Observe that Seeker cannot conclude the failure of any of properties (a)-(j). Suppose she concludes that properties (a)-(i) hold. Based on properties (a)-(j), the figure, and the description of Hider's strategy it is straightforward to check the following. The unique vertex with infinite green degree and infinite red degree must be $a$. The vertices of finite red degree must be $p_0,\ldots,p_5$, their order is determined by their red degree. The vertices not connected to each of the $p_i$ by a green edge must be $q_0,\ldots,q_5$, their order is determined by the edges between $P$ and $Q$. The remaining vertices must be the $x_i$. Their order is determined by the unique infinite green path they form. In short, the vertex labels must reflect properties (a)-(i). But then she cannot decide whether (j) holds, which proves the claim.

Thus we may assume that at some point, Seeker plays the last edge in $[Q_k]^2$, where $k=0$ or $k=1$. Then Hider responds green. After this, Hider will respond red to every edge in $[Q]^2$. She picks a sequence $h\in f^{-1}(1-k)$ such that if $x_ia$ is green, then $h(i)=1$, and if $x_ia$ is red, then $h(i)=0$. This is possible because $f^{-1}(1-k)$ is dense in $Y$. From now on Hider follows $h$, that is, she responds green to the edge $x_ia$ if and only if $h(i)=1$. Regarding edges not of the form $x_ia$ and not in $[Q]^2$, she plays as described above in the lists before Claim~1. 

We have described Hider's strategy. It remains to check three cases based on which edges are white when the game terminates after infinitely many turns.

\textbf{Case 1.} There is a white edge in $[Q_0]^2$. (Analogous argument works for $Q_1$.)

On the one hand, we claim that the white edges can be colored so that the resulting coloring satisfies $\cals_0$, that is, $\calp_\infty\cap\cals_0\neq\emptyset$. Indeed, color every white edge between elements of $Q$ red; color the white edges between $A$ and $X$ according to $h$; color the remaining white edges according to Figure~\ref{f.separating_notions} (or more formally, according to the definition of $\cals_0$). Now properties (a)-(j) hold by the definition of Hider's strategy.

On the other hand, we also claim that the white edges can be colored so that $\cals_0$ fails for the resulting coloring, that is, $\calp_\infty\cap(2^\pairs\setminus\cals_0)\neq\emptyset$. More specifically, we claim that if we modify the coloring defined in the preceding paragraph only by coloring one white edge in $[Q_0]^2$ green, then $\cals_0$ fails. Suppose that $\cals_0$ holds. Based on properties (a)-(j), the figure, the description of Hider's strategy, and the preceding paragraph, we can check exactly as in the proof of Claim 1 that the vertex labels must reflect properties (a)-(i). But then property (j) fails because there are green edges in both $[Q_0]^2$ and $[Q_1]^2$.

Thus Seeker cannot decide whether $\cals_0$ holds without playing every edge in $[Q]^2$.

\textbf{Case 2.} There is a white edge $x_ia$ between $A$ and $X$.

Exactly as in the first part of Case~1, we can color the white edges so that $\cals_0$ holds. On the other hand, we claim that if we modify this coloring only by changing the color of $x_ia$, then $\cals_0$ fails. Suppose that $\cals_0$ holds. As in Case~1, the vertex labels reflect properties (a)-(i). Recall that $f(x)\neq f(y)$ whenever $|\supp x\Delta \supp y|=1$, hence, by property (j) and the fact that there is a green edge in $[Q]^2$, we conclude that $\cals_0$ fails if we modify the color of $x_ia$. Thus Seeker cannot decide whether $\cals_0$ holds without playing every edge between $A$ and $X$.

\textbf{Case 3.} There is a white edge $e$ that is not between $A$ and $X$, not in $[Q_0]^2$ and not in $[Q_1]^2$. Again, as in the first part of Case~1, we can color the white edges so that $\cals_0$ holds.

The failure of $\cals_0$ requires some additional work because to check that $\cals_0$ does not become true accidentally if we change the color of $e$, we need more than the rigidity of graphs with property $\cals_0$ that we have already used several times.

(I) Let $[V]^2=G'\cup R'$ be a partition of $[V]^2$ into green and red color classes that extend the color classes in Figure~\ref{f.separating_notions}. That is, if an edge is green (resp.~red) in the figure, then it belongs to $G'$ (resp.~$R'$). (A more formal definition that avoids the figure: if an edge occurs in the first (resp.~second) list at the beginning of the description of Hider's strategy, then it belongs to $G'$ (resp.~$R'$).)

(II) Let $[V]^2 = G'' \cup R''$ be another partition into green and red color classes, satisfying the same conditions except for one edge: there is exactly one edge $e \in [V]^2$ that appears as green or red in Figure~\ref{f.separating_notions} (or equivalently, in the two aforementioned lists), but in the partition $G'' \cup R''$, it belongs to the opposite color class.

\begin{lemma}\label{l.nonisomorphic_extensions}
Assume also that $\deg_{G'}(a)=\deg_{R'}(a)=\infty$. Then the graphs $(V,G')$ and $(V,G'')$ are not isomorphic.
\end{lemma}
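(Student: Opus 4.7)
My plan is to pin down any hypothetical isomorphism $\phi:(V,G')\to(V,G'')$ by exploiting the rigidity of $(V,G')$, and then derive a contradiction from a graph invariant that is sensitive to a single edge flip. The key starting observation is that in $G'$ one can recover the partition $V=A\cup P\cup Q\cup X$ as well as the exact labelling of $p_i$, $q_j$ (and hence the split $Q=Q_0\cup Q_1$) from purely graph-theoretic data: by hypothesis $A=\{a\}$ is the unique vertex of both infinite green and infinite red degree; $P$ is the set of vertices of finite red degree, with $p_i$ singled out by $\deg_{R'}(p_i)=i+1$; $Q$ consists of the remaining vertices having strictly fewer than $6$ green neighbours in $P$, each $q_j$ being pinned down by having exactly $j$ such neighbours; and $X$ is the rest.

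I would then split on whether $e$ has an endpoint in $P$. In Case~B, where $e$ belongs to one of the figure-categories $[A,P]$, $[P]^2$, $[P,Q]$, or $[P,X]$, the invariant of choice is the multiset of finite red degrees. In $G'$ this multiset equals $\{1,2,3,4,5,6\}$, and flipping $e$ shifts one of these values by $\pm 1$, or (in category $[P]^2$) two of them by $+1$. A short case-check through the five sub-patterns confirms that the resulting multiset is never again $\{1,2,3,4,5,6\}$, so isomorphism invariance of this multiset is already violated.

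In Case~A, where $e$ lies in $[A,Q]$, $[Q_0,Q_1]$, $[Q,X]$, or $[X]^2$, the red-degree multiset is unchanged. But since $e$ does not touch $P$ and the reconstruction above uses only red-degrees and green-adjacencies to $P$, the same definitions produce the same subsets $A,P,Q,X,Q_0,Q_1$ of $V$ when applied in $G''$; any isomorphism $\phi$ must therefore send each of them onto itself. It then suffices to exhibit, in each sub-category, a $\phi$-invariant quantity that distinguishes $G'$ from $G''$:
\begin{itemize}
\item the number of green edges between $A$ and $Q$ (respectively between $Q_0$ and $Q_1$, respectively between $Q$ and $X$) jumps from $0$ to $1$ when $e$ lies in $[A,Q]$ (respectively $[Q_0,Q_1]$, $[Q,X]$);
\item for $e=x_ix_{i+1}$ the green subgraph induced on $X$ changes from a connected infinite ray to a disconnected graph with exactly two components;
\item for $e=x_ix_j$ with $|i-j|\ge 2$ the green subgraph induced on $X$, previously acyclic, acquires a cycle.
\end{itemize}

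The only real obstacle is the bookkeeping. One needs to verify that in Case~B the altered multiset really is never $\{1,2,3,4,5,6\}$ again, and in Case~A that flipping $e$ does not accidentally create a second vertex of infinite--infinite type, reduce an infinite red degree to a finite one, or alter the green-adjacencies between $P$ and $V\setminus P$ (any of which could spoil the identification of $A$, $P$, $Q$, or $X$ in $G''$). All three checks are immediate since a single edge-flip changes each affected degree by at most $1$ and, by the Case~A hypothesis, $e$ has no endpoint in $P$.
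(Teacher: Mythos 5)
There is a genuine gap, and it lies in how you read conditions (I)--(II). You treat $G''$ as ``$G'$ with the single edge $e$ flipped'', but that is not the hypothesis: $G'$ and $G''$ are only required to agree with Figure~\ref{f.separating_notions} on the edges that are coloured there (with $G''$ disagreeing at the one specified edge $e$), while on the \emph{unspecified} edges --- those inside $Q_0$, inside $Q_1$, and the edges $x_ia$ --- the two partitions are arbitrary and independent of each other. This generality is essential for the application: in Claim~2 the lemma is invoked for the one concrete $G''$ built in Case~3 against \emph{every} admissible $G'$ arising from an isomorphic copy of a graph with property $\cals_0$, and these certainly differ on the $x_ia$ edges. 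Your Case~A rests on the assertion that ``the same definitions produce the same subsets $A,P,Q,X,Q_0,Q_1$ of $V$ when applied in $G''$'', justified by ``a single edge-flip changes each affected degree by at most $1$''. This fails: for example, $G''$ may colour every edge $x_ia$ green, in which case no vertex of $(V,G'')$ has both infinite green and infinite red degree (your definitional $A$ computed in $G''$ is empty), and the set of vertices of finite red degree in $G''$ is $P\cup\{a\}$ rather than $P$; likewise the colours inside $Q_0$ and $Q_1$ in $G''$ need not match those in $G'$. The same oversight makes the Case~B bookkeeping incomplete (the multiset of finite red degrees of $G''$ may acquire a seventh element contributed by $a$), although there the conclusion survives since the multisets then differ already in size. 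Also, the claim ``in Case~A the red-degree multiset is unchanged'' is false for the same reason, though it is not load-bearing.

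The repair is exactly the detour the paper takes: argue through the hypothetical isomorphism $\Phi$ rather than through a reconstruction performed inside $G''$. Since isomorphisms preserve red degrees, $\Phi$ maps the six-element set $\{x\in V:\ \deg_{R'}(x)<\infty\}=P$ onto $\{x\in V:\ \deg_{R''}(x)<\infty\}$, which can only be $P$ or $P\cup\{a\}$; comparing cardinalities gives $\Phi(P)=P$, and the red degrees $i+1$ then pin $\Phi$ down on $P$ (or give an immediate contradiction when $e$ meets $P$). Similarly $\Phi(a)=a$ because $a$ is the only vertex of $(V,G'')$ that can possibly have infinite green and infinite red degree, and then $\Phi(Q)=Q$, $\Phi(X)=X$, and in fact $\Phi(q_j)=q_j$ via the green adjacencies to $P$. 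With this in place, your distinguishing invariants (the red-degree multiset, the green-edge counts between the classes, and connectivity/acyclicity of the green graph on $X$) do work and essentially reproduce the paper's subcases; your explicit pointwise treatment of the case $e$ between $Q_0$ and $Q_1$ is in fact the right move, since a mere count of green edges inside $Q$ is not usable there (the colours inside $Q_0$ and $Q_1$ are unconstrained in both graphs).
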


\begin{proof}
Suppose that $\Phi:V\to V$ is an isomorphism between $(V,G')$ and $(V,G'')$. First, note that $P=\{x\in V:\ \deg_{R'}(x)<\infty\}$. (We slightly abuse Notation~\ref{n.notation_list} (a) since now the vertex set is $V$.) On the other hand, $\{x\in V:\ \deg_{R''}(x)<\infty\}$ is equal to either $P$ or $P\cup A$. Since $P$ is finite and $\Phi$ is an isomorphism, the latter is not possible. Thus $\Phi(P)=P$.
Second, note that $Q$ can be described as $\{x\in V:\ \exists y\in P\ (xy\in R')\}$. Also, $|\Phi(Q)|=|Q|=6$ since $\Phi$ is an isomorphism. Then $\Phi(P)=P$, $|\Phi(Q)|=|Q|=6$ and $\Phi(Q)\neq Q$ together would imply that the two colorings differ on at least two edges, hence we get $\Phi(Q)=Q$. Third, observe that $a$ is the unique vertex $x\in V$ with $\deg_{G'}(x)=\deg_{R'}(x)=\infty$. Since $a$ is also the only vertex for which $\deg_{G''}(x)=\deg_{R''}(x)=\infty$ \textit{can} hold, we must have $\Phi(a)=a$. Thus the classes $A$, $P$, $Q$, and $X$ are invariant under $\Phi$. In particular, $e$ cannot belong to $[A\cup P\cup Q]^2$ since $A\cup P\cup Q$ is finite.

\textbf{Subcase 1.} The edge $e$ is of the form $x_ix_{i+1}$. Then $\Phi$ maps the connected graph $G'|_X$ onto the disconnected one $G''|_X$, a contradiction.

\textbf{Subcase 2.} The edge $e$ is of the form $x_ix_j$ with $|i-j|>1$. Then $\Phi$ maps the tree $G'|_X$ onto the non-tree $G''|_X$, a contradiction.

\textbf{Subcase 3.} The edge $e$ joins vertices from $X$ and $P$. Then $\Phi$ maps a green edge between $X$ and $P$ to a red edge between $X$ and $P$, a contradiction.

\textbf{Subcase 4.} The edge $e$ joins vertices from $X$ and $Q$. Then $\Phi$ maps a red edge between $X$ and $Q$ to a green edge between $X$ and $Q$, a contradiction.

The proof of Lemma~\ref{l.nonisomorphic_extensions} is complete.
\end{proof}

Now we finish the proof of Case~3. We color the white edges so that the resulting color classes $G''$ and $R''$ satisfy the conditions of (II). In other words, the resulting coloring extends that of Figure~\ref{f.separating_notions} except for exactly one edge.

\textbf{Claim 2.} Property $\cals_0$ fails for $G''$.

It is clear from the definition of $\cals_0$ that any graph with property $\cals_0$ is isomorphic to a graph $G'$ that satisfies the conditions of (I) (that is, $G'$ and $R'=\pairs\setminus G'$ extend the coloring in Figure~\ref{f.separating_notions}) and for which $\deg_{G'}(a)=\deg_{R'}(a)=\infty$. By Lemma~\ref{l.nonisomorphic_extensions}, such a $G'$ cannot be isomorphic to $G''$, hence $\cals_0$ fails for $G''$, which proves the claim.

The proof of Theorem~\ref{t.separating_notions} is complete.
\end{proof}

We believe the following is worth recording. (See also Questions~\ref{q.separating_by_natural} and \ref{q.seperating_by_borel}.)

\begin{prop}\label{p.S_0_not_borel}
The property $\cals_0$ defined in the proof of Theorem~\ref{t.separating_notions} is not Borel (regardless of the choice of the map $f$ in Lemma~\ref{l.pattern}).
\end{prop}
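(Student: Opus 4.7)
My strategy is to continuously reduce (a complement of) $f^{-1}(0) \subseteq Y$ to $\cals_0$, and then to show that no function $f$ satisfying condition (2) of Lemma~\ref{l.pattern} can be Borel measurable. Since Borel reductions preserve Borelness, this forces $\cals_0$ to be non-Borel.

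\textbf{Step 1: The reduction.} Using the identification $\nat = A \cup P \cup Q \cup X$ fixed in the proof of Theorem~\ref{t.separating_notions}, I would construct a continuous map $\Phi : Y \to 2^\pairs$ as follows. Let $H_0 \subseteq \pairs$ be the fixed graph containing exactly the edges listed as green in Hider's strategy \emph{except} those joining $A$ and $X$, together with \emph{all} edges in $[Q_1]^2$ (and no edges in $[Q_0]^2$). For $g \in Y$ set
\[
\Phi(g) = H_0 \cup \{x_i a : g(i) = 1\}.
\]
Clearly $\Phi$ is continuous. By inspecting the clauses (a)--(j), the graph $\Phi(g)$ satisfies (a)--(i) for \emph{every} $g \in Y$: the only clauses involving the edges $x_i a$ are (a) and (i), both of which require $a$ to be joined to infinitely many $x_i$ by green edges and to infinitely many by red edges, and this is guaranteed by $g \in Y$. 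Clause (j), however, simplifies to ``$f(g) = 0$'': since $[Q_0]^2$ is all red while $[Q_1]^2$ is all green in $\Phi(g)$, the first half of (j) is automatic, while the second half fails precisely when $f(g) = 1$. Hence $\Phi^{-1}(\cals_0) = f^{-1}(0)$. So it suffices to show $f^{-1}(0)$ is not a Borel subset of $Y$ (equivalently, of $2^\nat$, since $Y$ is a $G_\delta$).

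\textbf{Step 2: $f^{-1}(0)$ is not Borel.} Suppose for contradiction that $A := f^{-1}(0)$ is Borel. Let $\mu$ be the uniform Bernoulli measure on the compact group $2^\nat$ (with coordinate-wise addition mod $2$); then $\mu(Y) = 1$. Set $h := 1 - 2\cdot\mathbf{1}_A \in L^2(\mu)$, which takes values in $\{-1,+1\}$ a.e. For each $n \in \nat$ the translation $T_n : x \mapsto x + e_n$ is $\mu$-preserving, maps $Y$ to $Y$, and satisfies $h \circ T_n = -h$ on $Y$ by Lemma~\ref{l.pattern}(2); since $\mu(Y) = 1$, this identity holds $\mu$-a.e. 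Expanding $h$ in the Walsh basis $\chi_S(x) = (-1)^{\sum_{i \in S} x(i)}$ (with $S$ ranging over $[\nat]^{<\omega}$) and using $\chi_S \circ T_n = (-1)^{\mathbf{1}[n \in S]}\chi_S$, the identity $h \circ T_n = -h$ forces
\[
\hat h(S) = 0 \quad \text{whenever } n \notin S.
\]
Since every finite $S$ omits some $n$, this yields $\hat h \equiv 0$, so $h = 0$ $\mu$-a.e., contradicting $|h| = 1$ a.e. Thus $A$ is not Borel, which completes Step~2 and hence the proof.

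\textbf{Main obstacle.} The delicate step is bookkeeping in Step~1: verifying that conditions (a)--(i) are automatic for $\Phi(g)$, which uses crucially that $g \in Y$ (for the infinite degrees in (a) and (i)) together with the rigid structure of $H_0$ (for (b)--(h)), so that (j) becomes the \emph{only} clause depending on $g$. Step~2 is a clean Fourier-analytic argument on $2^\nat$, exploiting that the hypothesized function $h$ is anti-invariant under every single-coordinate flip; the same conclusion could alternatively be extracted from ergodicity of the $E_0$-action, but the Walsh expansion gives the contradiction in one line and makes it transparent that the argument depends only on (2) of Lemma~\ref{l.pattern}, so the conclusion holds regardless of the choice of $f$.
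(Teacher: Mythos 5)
Your proposal is correct, and its overall shape matches the paper's: both arguments pull back the non-Borelness of the $2$-coloring $f$ through a Borel (in your case continuous) map into $2^\pairs$ whose image consists of graphs satisfying (a)--(i), with only clause (j) sensitive to the input. Your $\Phi$ is essentially the paper's map $\varphi$ with the roles of $Q_0$ and $Q_1$ interchanged, so you reduce $f^{-1}(0)$ instead of $f^{-1}(1)$ --- an immaterial difference. The genuine divergence is in how non-Borelness of the coloring is established: the paper argues by Baire category (a Borel color class would be comeager in some basic clopen set $N_s$, contradicting that the coordinate-flip homeomorphism of $N_s$ swaps the two classes, $Y$ being comeager), whereas you use the uniform Bernoulli measure and a Walsh-expansion computation showing that a $\{\pm 1\}$-valued $L^2$ function anti-invariant under every coordinate flip must vanish. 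Both arguments use only condition (2) of Lemma~\ref{l.pattern}, so the ``regardless of $f$'' clause is preserved either way; yours is measure-theoretic where the paper's is category-theoretic, and both are short and standard instances of the fact that $E_0$ admits no Borel transversal-type object. One point you pass over quickly, and which the paper explicitly flags (reducing it to ``the only automorphism of $\varphi(x)$ is the identity''): when $f(g)=1$ you must rule out $\Phi(g)$ satisfying $\cals_0$ via some unintended identification of the distinguished vertices. This does hold, because (a)--(i) pin the labeling down uniquely --- $a$ is the unique vertex of infinite green and infinite red degree, the $p_i$ are the vertices of finite red degree ordered by red degree, the $q_j$ are determined by their green adjacencies to $P$, and the ordering of $X$ is determined by the one-way infinite green path, exactly as in Claim~1 of the proof of Theorem~\ref{t.separating_notions} --- but a complete write-up should state this rigidity explicitly rather than treat (j) as automatically evaluated in the intended labeling.
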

\begin{proof}
(Sketch) First, observe that the bipartite graph $G$ defined in the proof of Lemma~\ref{l.pattern} does not have a Borel $2$-coloring because otherwise one of the color classes would be comeager in a basic clopen set of the form $N_s$, which contradicts the fact that the map $h: N_s\to N_s$ defined by $h(x)(j)=x(j)\iff j\neq l(s)$ is a homeomorphism of $N_s$ that swaps the color classes.

Second, we define a Borel map $\varphi:Y\to 2^\pairs$ such that for any coloring $f:Y\to\{0,1\}$ of $G$ and the associated property $\cals_0(f)$ (defined as in the proof of Theorem~\ref{t.separating_notions}), we have that $x\in f^{-1}(1)\iff \varphi(x)\in\cals_0(f)$, which shows that $\cals_0(f)$ cannot be Borel. For notational simplicity, let us identify $\nat$ with $V$ as in part (2) of the proof of Theorem~\ref{t.separating_notions}, so we have $\pairs=[V]^2$. We define $\varphi$ so that the value $\varphi(x)(e)$ depends on $x$ if and only if $e$ is of the form $x_ia$. For these edges, we put $\varphi(x)(x_ia)=1\iff x(i)=1$. For any $i\in\{0,1\}$, we put $\varphi(x)(e)=1-i$ if $e$ is an edge between vertices of $Q_i$. For the remaining edges $e$, we put $\varphi(x)(e)=1$ if and only if they are green in Figure~\ref{f.separating_notions} (or alternatively, they appear in the first list at the beginning of part (2) in the proof of Theorem~\ref{t.separating_notions}). It is clear from the definitions that $\varphi$ is Borel and $\varphi(x)\in\cals_0(f)$ whenever $x\in f^{-1}(1)$. To check that $\varphi(x)\notin\cals_0(f)$ if $x\notin f^{-1}(1)$, it suffices to verify that the only automorphism of $\varphi(x)$ is the identity. This can be done by a similar argument as in the proof of Claim~1 in the proof of Theorem~\ref{t.separating_notions}, which we leave to the reader.
\end{proof}

\section{Determinacy and descriptive complexity}\label{s.determinacy_and_descriptive_complexity}

In this section, we lay the foundations of studying $\infty$-elusiveness via descriptive set theory, and present partial results related to the following fundamental question, which, to the best of our knowledge, remains open.

\begin{question}\label{q.undetermined}
Is every game of the form $G(\cals)$ determined? In other words, is every $\infty$-elusive graph property strongly $\infty$-elusive?
\end{question}

Recall from Section~\ref{s.prelim} that we view graph properties as isomorphism-invariant subsets of the Cantor space $2^\pairs$, therefore it makes sense to talk about the descriptive complexity of a graph property. We follow the usual descriptive-set-theoretic formalism using trees to describe infinite games, see Subsection~2.3.

Let $A=\pairs\cup\{\text{green, red}\}$.
Recall that $e_0,e_1,\ldots$ is an enumeration of $\pairs$. For any graph property $\cals$, the rules of the game $G(\cals)$ define a subtree $F$ of $A^{<\omega}$ in a natural way. We put $s\in F$ if and only if the following hold:
\begin{itemize}
    \item $s(2i+1)\in\{\text{green, red}\}$ for every $i\in\nat$,
    \item $s(2i)\in\pairs$ for every $i\in\nat$,
    \item $s(2i)\neq s(2j)$ if $i,j\in\nat$ and $i\neq j$.
\end{itemize}

Note that $F$ does not depend on $\cals$. To every $x\in [F]$ we associate the color classes
$$G^x=\{e\in\pairs:\ \exists i\in\nat\ (x(2i)=e\land x(2i+1)=\text{green})\},$$
$$R^x=\{e\in\pairs:\ \exists i\in\nat\ (x(2i)=e\land x(2i+1)=\text{red})\},$$
$$W^x=\pairs\setminus(G^x\cup R^x).$$
Let also $\calp^x=\{G\subseteq\pairs:\ G^x\subseteq G\subseteq G^x\cup W^x\}$.

\begin{remark}\label{r.open_inverse_images}
For any $e\in\pairs$ the sets $\{x\in [F]:\ e\in G^x\}$ and $\{x\in [F]: e\in R^x\}$ are open.
\end{remark}

In harmony with the definitions in Section~\ref{s.prelim}, the \textbf{winning set} of Seeker is
$$A_\cals=\{x\in [F]:\ (W^x\neq\emptyset)\land((\calp^x\subseteq \cals)\lor(\calp^x\subseteq 2^\pairs\setminus\cals))\}.$$
The (descriptive) complexity of the game $G(\cals)$ is the complexity of $A_\cals$ as a subset of $[F]$. First, we analyze the complexity of $G(\cals)$ when $\cals$ is Borel.

\begin{prop}\label{p.borel_induces_coanalytic}
If $\cals$ is a Borel graph property, then $G(\cals)$ is a co-analytic game.
\end{prop}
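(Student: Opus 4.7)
The plan is to express the winning set $A_\cals$ as a Boolean combination of sets whose descriptive complexity can be controlled via Remark~\ref{r.open_inverse_images} and a single application of the fact that continuous images of Borel sets in Polish spaces are analytic. The key auxiliary object will be the graph
$$P=\{(x,G)\in[F]\times 2^\pairs:\ G\in\calp^x\}$$
of the ``parameter assignment'' $x\mapsto\calp^x$; showing that it is closed will let me turn each quantifier ``$\exists G\in\calp^x$'' into a projection of a Borel set.

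First I would verify that $P$ is closed. By Remark~\ref{r.open_inverse_images}, for every $e\in\pairs$ both $\{x:\ e\in G^x\}$ and $\{x:\ e\in R^x\}$ are open in $[F]$, while $\{G:\ G(e)=i\}$ is clopen in $2^\pairs$ for $i\in\{0,1\}$. Hence the two implications ``$e\in G^x\Rightarrow G(e)=1$'' and ``$e\in R^x\Rightarrow G(e)=0$'' each cut out a closed subset of $[F]\times 2^\pairs$, and $P$ is their intersection over $e\in\pairs$. The same remark also yields
$$\{x\in[F]:\ W^x\neq\emptyset\}=\bigcup_{e\in\pairs}\bigl([F]\setminus(\{x:\ e\in G^x\}\cup\{x:\ e\in R^x\})\bigr),$$
which is $F_\sigma$ and in particular Borel.

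Then I would rewrite
$$A_\cals=\{x:\ W^x\neq\emptyset\}\cap\bigl(\{x:\ \calp^x\subseteq\cals\}\cup\{x:\ \calp^x\subseteq 2^\pairs\setminus\cals\}\bigr)$$
and show that each of the two inner sets is co-analytic. The set $\{x:\ \calp^x\cap(2^\pairs\setminus\cals)\neq\emptyset\}$ is the continuous projection to $[F]$ of the set $P\cap([F]\times(2^\pairs\setminus\cals))$, which is Borel because $P$ is closed and $\cals$ is Borel; hence this projection is analytic and its complement $\{x:\ \calp^x\subseteq\cals\}$ is co-analytic, and symmetrically for the other inner set. Since the co-analytic class is closed under finite unions and under intersection with Borel sets, $A_\cals$ is co-analytic. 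The only non-routine step I expect is the closedness of $P$; I would not try to improve the bound using compactness of $2^\pairs$, since projection over a compact Polish factor does not preserve Borelness in general, and, as suggested by Theorem~\ref{t.complete_coanalytic}, co-analytic is the right complexity here.
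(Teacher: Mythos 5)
Your proposal is correct and follows essentially the same route as the paper's proof: the same decomposition $A_\cals=\{x:\ W^x\neq\emptyset\}\cap(\{x:\ \calp^x\subseteq\cals\}\cup\{x:\ \calp^x\subseteq 2^\pairs\setminus\cals\})$, the same key object $P=\{(x,G):\ G\in\calp^x\}$ shown to have low complexity (you get closed where the paper only records Borel, a harmless sharpening), and the same conclusion that the two inclusion sets are complements of projections of Borel sets, hence co-analytic.
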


\begin{proof}
By definition, $A_\cals$ is of the form $\cala\cap(\calb\cup\calc)$, where
$$\cala=\{x\in [F]:\ \exists e\in\pairs\ \forall i\in\nat\ (x(2i)\neq e)\},$$
$$\calb=\{x\in [F]:\ \forall G\in 2^\pairs\ ((G\in\cals)\lor (G\notin\calp^x))\},$$
$$\calc=\{x\in [F]:\ \forall G\in 2^\pairs\ ((G\notin\cals)\lor (G\notin \calp^x))\}.$$
Let us unravel the definitions:
$$G\in\calp^x\iff ((G^x\subseteq G)\land (G\cap R^x=\emptyset))\iff$$
$$\iff(\forall e\in\pairs\ ((e\in G\lor e\notin G^x)\land (e\notin G\lor e\notin R^x))),$$
where
$$e\in G^x\iff \exists i\in\nat\ ((x(2i)=e)\land (x(2i+1)=\text{green}))$$
and
$$e\in R^x\iff\exists i\in\nat\ ((x(2i)=e)\land (x(2i+1)=\text{red})),$$
which shows that $\{(x,G)\in[F]\times 2^\pairs:\ G\in\calp^x\}$ is Borel. This makes $\calb$ and $\calc$ co-analytic, hence $A_\cals$ is also co-analytic.
\end{proof}

The following theorem shows that Proposition~\ref{p.borel_induces_coanalytic} is sharp. Thus the Borel Determinacy Theorem does not answer Question~\ref{q.undetermined}, not even for Borel graph properties.

\begin{theorem}\label{t.complete_coanalytic}
Let $\cals$ be the property of being an infinite path. Then $\cals$ is Borel and $G(\cals)$ is a complete co-analytic game.
\end{theorem}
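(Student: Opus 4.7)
The property $\cals$ of being an infinite path on $\nat$ is characterized by the Borel conditions: $G$ is connected, $G$ is acyclic, every vertex has degree $1$ or $2$, and exactly one vertex has degree $1$. Thus $\cals$ is Borel, so Proposition~\ref{p.borel_induces_coanalytic} gives that $G(\cals)$ is a co-analytic game. For the completeness half, I would first exploit that $\cals$ is sensitive (Remark~\ref{r.monotone_sensitive}): adding any edge to a graph in $\cals$ destroys the property, so $\calp^x \subseteq \cals$ never holds when $W^x \neq \emptyset$. Consequently,
$$A_\cals = \{x \in [F] : W^x \neq \emptyset \text{ and no infinite path } P \text{ satisfies } G^x \subseteq P \subseteq G^x \cup W^x\},$$
so it suffices to exhibit plays for which the condition ``no such $P$'' encodes an arbitrary co-analytic statement.

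\textbf{The reduction.} I would construct a continuous map $\Phi : \trees \to [F]$ with $T \in \wellfounded \setminus \{\emptyset, \{\emptyset\}\}$ if and only if $\Phi(T) \in A_\cals$; this establishes complete co-analyticity since $\wellfounded \setminus \{\emptyset, \{\emptyset\}\}$ is known to be complete co-analytic (Subsection~2.3). The design goal is to make infinite-path completions of $G^{\Phi(T)}$ correspond to infinite branches of $T$. Concretely, I would identify $\nat$ with a ``doubled tree'' vertex set $V = \{v_s, u_s : s \in \nat^{<\omega}\}$, have Hider answer green to the ``rung'' edges $\{v_s, u_s\}$, have Seeker leave white precisely the tree edges $\{v_s, v_{s^{\frown}a}\}$ and $\{u_s, u_{s^{\frown}a}\}$ for $s^{\frown}a \in T$, and have Hider answer red to all remaining edges. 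Given any branch $\sigma \in [T]$, one then exhibits an explicit $S_\sigma \subseteq W^{\Phi(T)}$ threading alternately through the two copies of $\sigma$ and producing an infinite-path completion. Continuity of $\Phi$ is immediate because the $i$-th move and response depend only on finitely many clopen queries of the form ``$s^{\frown}a \in T$?''.

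\textbf{Main obstacle.} The principal technical difficulty is accommodating the vertices $v_s, u_s$ with $s \notin T$: in the naive setup above they retain only their rung edge and no accessible further edges, which would let Seeker win regardless of $T$ and trivialize the reduction. The remedy is to supplement the play with a fixed green ``skeleton'' weaving every off-tree vertex into the background of a would-be path, arranged so that closing this skeleton into an actual infinite path forces the white-edge choices to follow an infinite branch of $T$. The detailed verification---that every $\sigma \in [T]$ yields a genuine infinite-path completion, while well-foundedness of $T$ forces every completion to either leave some vertex isolated or produce a branching point of degree $\geq 3$---is the core combinatorial work, reminiscent in spirit of the rigidity arguments used in the proof of Proposition~\ref{p.S_0_not_borel}.
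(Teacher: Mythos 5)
The Borel computation and the co-analytic upper bound (via Proposition~\ref{p.borel_induces_coanalytic}) are fine, and your overall plan --- a continuous reduction of $\wellfounded\setminus\{\emptyset,\{\emptyset\}\}$ to $A_\cals$ in which infinite-path completions correspond to infinite branches of $T$ --- is exactly the paper's strategy, including the use of sensitivity to dispose of the disjunct $\calp^x\subseteq\cals$. But the completeness half is not actually proved. Your doubled-tree construction, as specified, fails for every $T$: since Hider answers green to \emph{all} rung edges $\{v_s,u_s\}$, every completion $P\in\calp^{\Phi(T)}$ contains the rungs at off-tree nodes $s\notin T$, and these form components that can never be attached to anything (all other edges at such vertices are red), so no completion is ever an infinite path --- Seeker then wins whenever $W^{\Phi(T)}\neq\emptyset$, regardless of whether $T$ is well-founded, and the reduction trivializes. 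You recognize this and propose to repair it with a ``fixed green skeleton,'' but that skeleton is never constructed, and the two verifications you defer (every branch of $T$ yields a path completion; well-foundedness of $T$ excludes all path completions) are precisely the substantive content of the theorem. As it stands the argument has a named but unresolved hole at its core.

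The difficulty is self-inflicted: any green edge is forced into \emph{every} completion, so an infinite green matching is poison. The paper's reduction avoids green edges entirely. Embed $T$ as a graph-theoretic tree $E_T\subseteq\pairs$ (vertex $a_s$ for $s\in\nat^{<\omega}$, edge $a_ta_{t^\frown j}$ present iff $t^\frown j\in T$), and let $x_T$ be the run in which Seeker enumerates $\pairs\setminus E_T$ in increasing order and Hider answers red every time. Then $G^{x_T}=\emptyset$, $W^{x_T}=E_T$, off-tree vertices are merely isolated (hence harmless), and $\calp^{x_T}$ is exactly the set of subgraphs of $E_T$; such a subgraph is an infinite path iff $E_T$ contains a ray, which happens iff $T$ is ill-founded (any ray in a tree eventually follows a branch), while $W^{x_T}\neq\emptyset$ iff $T\notin\{\emptyset,\{\emptyset\}\}$. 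Continuity of $T\mapsto x_T$ needs a short finite-dependence argument (the first $2n$ coordinates of $x_T$ depend only on membership in $T$ of the finitely many $s$ indexing edges below the largest index played so far), not unlike the observation you make for your map. So your single missing ingredient is replaced in the paper by the simple choice ``Hider always says red,'' which eliminates both the doubling and the skeleton; if you want to salvage your version, you would have to supply and verify the skeleton in full, which is considerably more work than the original problem requires.
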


\begin{proof}
Observe that
$$G\in\cals\iff \exists n\in\nat\ ((\deg_G(n)=1)\land\forall m\in\nat\setminus\{n\}\ (\deg_G(m)=2))\land G\text{ is connected}.$$
Also, for any $k\in\nat$, we can write $\deg_G(n)=k$ as
$$\exists \{m_0,\ldots,m_{k-1}\}\in[\nat]^k\ (\forall i<k\ (nm_i\in G)\land\forall m\in\nat\setminus\{m_0,\ldots,m_{k-1}\}\ (nm\notin G)),$$
and $G$ is connected if and only if
$$\forall \{n,m\}\in\pairs\ \exists k\geq 2\ \exists i_0,\ldots,i_{k-1}\in\nat\ (i_0=n\land i_{k-1}=m\land \forall j<k\ (i_ji_{j+1}\in G)).$$
Thus $\cals$ is Borel. Now we construct a continuous reduction of the complete co-analytic set $\wellfounded\setminus\{\emptyset,\{\emptyset\}\}$ (see Subsection~2.3) to $A_\cals$. For notational convenience, let us fix an indexing of natural numbers with finite sequences: $\nat=\{a_s:\ s\in\nat^{<\omega}\}$. Also, for every $s\in\nat^{<\omega}$ and $i\in \nat$, let $s_i=s$ and $e^s=e_i$ if and only if $s$ is of the form $t^\frown j$ for some $j\in\nat$, $t\in\nat^{<\omega}$ such that $a_ta_s=e_i$. Let $E_T=\{e^s:\ s\in T\}$. Thus $E_T$ is isomorphic to $T$ if $T$ is viewed as a graph-theoretic tree. The map $\Phi:\trees\to[F]$ is defined as follows. We assign to every $T\in\trees$ the unique sequence $x_T$ such that $x_T(2i+1)=\text{red}$ for every $i\in\nat$ and $(x_T(2i))_{i\in\nat}$ is an increasing enumeration of the set $\pairs\setminus E_T$ in the sense that $f:\nat\to\nat$ defined by $f(i)=j\iff x_T(2i)=e_j$ is order-preserving. Thus $x_T$ is a run of the game $G(\cals)$ that results in the coloring that is white on $E_T$ and red on its complement.

\textbf{Claim.} The map $\Phi$ is continuous.

For any $n\in\nat^+$ and $T\in\trees$, let $e_{i_{n,T}}$ be the greatest-indexed edge in $\{x_T(2k):\ k<n\}$. We claim that for any $T'\in\trees$, if $(s_i\in T\iff s_i\in T')$ holds whenever $i\leq i_{n,T}$ and $s_i$ is defined, then $x_T|_{2n}=x_{T'}|_{2n}$. Otherwise $x_T|_{2n}\neq x_{T'}|_{2n}$ would be witnessed by an edge $e_j\in E_T\Delta E_{T'}$ with $j<i_{n,T}$, which contradicts the assumption $s_j\in T\iff s_j\in T'$ and thereby proves the claim.

It remains to verify that $T\in \wellfounded\setminus\{\emptyset,\{\emptyset\}\}\iff x_T\in A_\cals$.

First, note that for any $T\in\trees$ we have $G^{x_T}=\emptyset$ and $W^{x_T}=E_T$. In particular, $W^{x_T}=\emptyset\iff T\in\{\emptyset,\{\emptyset\}\}$. Thus, it suffices to prove that $E_T$ contains an infinite path if and only if $T\notin\wellfounded$. If $T\notin\wellfounded$, then $E_T$ clearly contains an infinite path. If $T\in\wellfounded$, then it contains no infinite path since any infinite path in $E_T$ would go along a branch of $T$ after finitely many steps.
\end{proof}

Although this is irrelevant from the perspective of Theorem~\ref{t.complete_coanalytic}, the property of being an infinite path is $\infty$-elusive since it is sensitive (recall Remarks~\ref{r.monotone_sensitive} and \ref{r.sensitive_elusive}).

Observe that for many graph properties $\cals$, either $\cals$ or $2^\pairs\setminus\cals$ is monotone (recall Definition~\ref{d.monotone}). The same holds for most properties considered in this paper. Thus, in contrast to Theorem~\ref{t.complete_coanalytic}, the following theorem says that for a wide range of natural graph properties $\cals$, the game $G(\cals)$ is determined.

\begin{theorem}\label{t.monotone_borel}
If $\cals$ is a monotone Borel graph property, then $G(\cals)$ is a Borel game. In particular, it is determined.
\end{theorem}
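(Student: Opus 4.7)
The plan is to exploit monotonicity via Remark~\ref{r.monotone} to rewrite the winning set $A_\cals$ in a form whose Borel complexity is transparent, and then to invoke Martin's Borel Determinacy Theorem. The payoff for this approach is that Remark~\ref{r.monotone} eliminates the quantification over all graphs $G \in 2^\pairs$ that forced the co-analytic upper bound in Proposition~\ref{p.borel_induces_coanalytic}.

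First, I would apply Remark~\ref{r.monotone} at stage $n = \infty$ to rewrite
$$A_\cals = \{x \in [F] : W^x \neq \emptyset\}\ \cap\ \big(\{x \in [F] : G^x \in \cals\}\ \cup\ \{x \in [F] : G^x \cup W^x \notin \cals\}\big).$$
Next, I would verify that the maps $x \mapsto G^x$ and $x \mapsto G^x \cup W^x = \pairs \setminus R^x$ from $[F]$ to $2^\pairs$ are Borel measurable. Since $2^\pairs$ carries the product topology, it suffices to check that for each $e \in \pairs$ the coordinate projections $x \mapsto \chi_{G^x}(e)$ and $x \mapsto \chi_{\pairs\setminus R^x}(e)$ into $\{0,1\}$ are Borel. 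But by Remark~\ref{r.open_inverse_images} the sets $\{x : e \in G^x\}$ and $\{x : e \in R^x\}$ are open in $[F]$, and their complements are closed, so both coordinate maps are in fact of very low Borel complexity. Consequently, because $\cals$ is Borel in $2^\pairs$, the preimages $\{x : G^x \in \cals\}$ and $\{x : G^x \cup W^x \in \cals\}$ are Borel subsets of $[F]$.

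For the remaining factor, I would observe that
$$\{x \in [F] : W^x \neq \emptyset\} = \bigcup_{e \in \pairs} \big(\{x : e \notin G^x\} \cap \{x : e \notin R^x\}\big),$$
which is a countable union of closed sets and hence $F_\sigma$. Combining, $A_\cals$ is a Boolean combination of Borel sets, and therefore Borel. Finally, since $[F]$ is a closed subset of $A^\nat$ for a countable alphabet $A$ and the payoff set $A_\cals$ is Borel, Martin's Borel Determinacy Theorem yields that $G(\cals)$ is determined.

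I do not anticipate a genuine obstacle: once Remark~\ref{r.monotone} is invoked, the whole argument is a routine bookkeeping of Borel pointclasses using only the openness statement of Remark~\ref{r.open_inverse_images}. The main conceptual point is that monotonicity collapses the natural co-analytic description of $A_\cals$ (in terms of universal quantifiers over completions $G \in \calp^x$) into a Borel one (in terms of the two extremal completions $G^x$ and $G^x \cup W^x$).
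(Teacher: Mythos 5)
Your proof is correct and follows essentially the same route as the paper: rewrite $A_\cals$ via Remark~\ref{r.monotone}, use Remark~\ref{r.open_inverse_images} to see that the coordinate sets are open/closed so that the relevant maps into $2^\pairs$ are of low Borel complexity, and conclude Borelness of the payoff set plus determinacy by Martin's theorem. Your explicit $F_\sigma$ expression for $\{x: W^x\neq\emptyset\}$ is only a cosmetic variation of the paper's treatment of the map $x\mapsto W^x$.
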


\begin{proof}
Recall that the winning set of Seeker is
$$A_\cals=\{x\in [F]:\ (W^x\neq\emptyset)\land((\calp^x\subseteq \cals)\lor(\calp^x\subseteq 2^\pairs\setminus\cals))\}.$$
Note that if $\cals$ is monotone, then $A_\cals$ can be written as
$$\{x\in [F]:\ (W^x\neq\emptyset)\land ((G^x\in\cals)\lor (G^x\cup W^x\notin\cals))\}.$$
To see that this set is Borel it suffices to check that the following $[F]\to 2^\pairs$ maps are $\mathbf{\Sigma^0_2}$-measurable (which means that the inverse image of any open set is $F_\sigma$):
$$x\mapsto G^x,\quad x\mapsto W^x,\quad x\mapsto G^x\cup W^x.$$ 
Since the family of $F_\sigma$ sets is closed under finite intersections and countable unions, it suffices to check that for any $e\in\pairs$ the following sets are $F_\sigma$:
$$\{x\in [F]:\ e\in G^x\},\ \{x\in [F]:\ e\notin G^x\},\ \{x\in [F]:\ e\in W^x\},$$
$$\{x\in [F]:\ e\notin W^x\},\ \{x\in [F]:\ e\in G^x\cup W^x\},\ \{x\in [F]:\ e\notin G^x\cup W^x\}.$$
It is clear from Remark~\ref{r.open_inverse_images} that these sets are open, closed, closed, open, closed, and open, respectively.
\end{proof}

Finally, we show that if the complexity of $\cals$ is sufficiently low, then we do not need the assumption of monotonicity to conclude that $G(\cals)$ is determined.

\begin{theorem}\label{t.ambiguous}
If $\cals$ is a $\mathbf{\Delta^0_2}$ graph property, then $G(\cals)$ is a $\mathbf{\Pi^0_3}$ game. In particular, it is determined.
\end{theorem}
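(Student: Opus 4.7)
The plan is to decompose Seeker's winning set $A_\cals$ as in the proof of Proposition~\ref{p.borel_induces_coanalytic} and analyze each piece carefully, exploiting the compactness of $\calp^x$. Write
$$A_\cals = \mathcal{A} \cap (\mathcal{B} \cup \mathcal{C}),$$
where $\mathcal{A} = \{x \in [F] : W^x \neq \emptyset\}$, $\mathcal{B} = \{x : \calp^x \subseteq \cals\}$, and $\mathcal{C} = \{x : \calp^x \subseteq 2^\pairs \setminus \cals\}$. By Remark~\ref{r.open_inverse_images}, for each $e \in \pairs$ the set $\{x : e \notin G^x \cup R^x\}$ is closed, so $\mathcal{A}$, being the countable union of these, is $\mathbf{\Sigma^0_2}$.

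The main step is the following lemma: for every open $U \subseteq 2^\pairs$, the set $\{x \in [F] : \calp^x \subseteq U\}$ is open in $[F]$. To prove it, for each $n \in \nat$ let
$$\calp^x_n = \{G \subseteq \pairs : G^x_n \subseteq G \text{ and } G \cap R^x_n = \emptyset\},$$
where $G^x_n, R^x_n$ are the green and red edges after $n$ turns of the run $x$. Then $\calp^x_n$ is a basic clopen subset of $2^\pairs$ depending only on $x|_{2n}$, the sequence $(\calp^x_n)_n$ is decreasing, and $\calp^x = \bigcap_n \calp^x_n$. If $\calp^x \subseteq U$, then the decreasing sequence of compact sets $\calp^x_n \cap (2^\pairs \setminus U)$ has empty intersection, so by compactness $\calp^x_n \subseteq U$ for some $n$; since this condition depends only on $x|_{2n}$, it holds for every $y$ in the cylinder $N_{x|_{2n}} \cap [F]$, proving openness.

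Now, using $\cals \in \mathbf{\Pi^0_2}$, write $\cals = \bigcap_n U_n$ with each $U_n$ open. The lemma yields
$$\mathcal{B} = \bigcap_n \{x : \calp^x \subseteq U_n\} \in \mathbf{\Pi^0_2}.$$
Since $2^\pairs \setminus \cals$ is likewise $\mathbf{\Pi^0_2}$, the same argument gives $\mathcal{C} \in \mathbf{\Pi^0_2}$. Finite unions preserve $\mathbf{\Pi^0_2}$ (via the standard $\bigcap_{n,m}(U_{1,n} \cup U_{2,m})$ manipulation), so $\mathcal{B} \cup \mathcal{C} \in \mathbf{\Pi^0_2}$. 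Therefore
$$A_\cals \in \mathbf{\Sigma^0_2} \cap \mathbf{\Pi^0_2} \subseteq \mathbf{\Pi^0_3},$$
and determinacy follows from Borel determinacy (indeed already from Davis's $\mathbf{\Sigma^0_3}$-determinacy theorem). The only nontrivial point is the compactness step in the lemma; the remaining bookkeeping is routine arithmetic in the Borel hierarchy.
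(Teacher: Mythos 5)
Your argument is correct, and while the overall skeleton is the same as the paper's (the decomposition $A_\cals=\mathcal A\cap(\mathcal B\cup\mathcal C)$, writing both $\cals$ and its complement as countable intersections of open sets, and exploiting compactness of $\calp^x$), your key lemma is proved by a genuinely different and more elementary route. The paper introduces the hyperspace $\calk(2^\pairs)$ with the Vietoris topology, shows via Proposition~\ref{p.vietoris_subbasis} that $x\mapsto\calp^x$ is $\mathbf{\Sigma^0_2}$-measurable, and concludes only that each set $\{x:\calp^x\subseteq U_i\}$ is $F_\sigma$, so that $A_1,A_2$ come out $\mathbf{\Pi^0_3}$. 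You instead approximate $\calp^x$ by the decreasing clopen sets $\calp^x_n$ determined by the first $n$ rounds and use the finite intersection property to show that $\{x:\calp^x\subseteq U\}$ is actually \emph{open}; this bypasses the Vietoris machinery entirely and gives the sharper conclusion that $\mathcal B$ and $\mathcal C$ are $\mathbf{\Pi^0_2}$, hence that $A_\cals$ is the intersection of a $\mathbf{\Sigma^0_2}$ set with a $\mathbf{\Pi^0_2}$ set, which lies in $\mathbf{\Delta^0_3}$ --- a finer bound than the stated $\mathbf{\Pi^0_3}$, with determinacy following as you say from Davis's theorem or Borel determinacy. One cosmetic slip: the line ``$A_\cals\in\mathbf{\Sigma^0_2}\cap\mathbf{\Pi^0_2}$'' literally asserts $A_\cals$ is $\mathbf{\Delta^0_2}$, which is not what you proved; what you mean (and what your argument establishes) is that $A_\cals$ is an intersection of a $\mathbf{\Sigma^0_2}$ set and a $\mathbf{\Pi^0_2}$ set, and the inclusion into $\mathbf{\Pi^0_3}$ is valid for that.
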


\begin{proof}
Note that for any $x\in [F]$ the set $\calp^x=\{G\subseteq\pairs:\ G^x\subseteq G\subseteq G^x\cup W^x\}$ is compact in $2^\pairs$. 

\textbf{Claim.} The map $p:[F]\to \calk(2^\pairs)$, $x\mapsto \calp^x$ is $\mathbf{\Sigma^0_2}$-measurable, where $\calk(2^\pairs)$ is the hyperspace of $2^\pairs$ (see Section~\ref{s.prelim} for the definitions).

Indeed, by Proposition~\ref{p.vietoris_subbasis}, it suffices to check that for any $s\in 2^{<\omega}$ the sets
$$\{x\in [F]:\ \calp^x\cap N_s\neq\emptyset\}\quad\text{and}\quad\{x\in[F]:\ \calp^x\cap N_s=\emptyset\}$$
are $F_\sigma$, where $N_s=\{G\in 2^\pairs:\ \forall i<l(s) (e_i\in G\iff s(i)=1)\}$. By the definitions, we have $\calp^x\cap N_s\neq\emptyset$ if and only if both $(\forall i<l(s) (s(i)=1\implies e_i\notin R^x))$ and $(\forall i<l(s) (s(i)=0\implies e_i\notin G^x))$ holds. Thus, it follows from Remark~\ref{r.open_inverse_images} that $\{x\in [F]:\ \calp^x\cap N_s\neq\emptyset\}$ is closed and $\{x\in[F]:\ \calp^x\cap N_s=\emptyset\}$ is open, which proves the claim.

By definition,
$$A_\cals=\{x\in [F]:\ (W^x\neq\emptyset)\land((\calp^x\subseteq \cals)\lor(\calp^x\subseteq 2^\pairs\setminus\cals))\},$$
hence it suffices to verify that
$$A_0=\{x\in[F]:\ W^x\neq\emptyset\},\ \ A_1=\{x\in[F]:\ \calp^x\subseteq\cals\},\ \ \text{ and }\ \ A_2=\{x\in[F]:\ \calp^x\subseteq 2^\pairs\setminus\cals\}$$ are $\mathbf{\Pi^0_3}$ since the family of $\mathbf{\Pi^0_3}$ sets is closed under finite unions and countable intersections. First,
$$A_0=\{x\in[F]:\ \exists n\in\nat\ \forall k\in\nat\ (x(2k)\neq e_n)\}$$
is clearly $F_\sigma$. Second, $\cals$ is $\mathbf{\Delta^0_2}$ by assumption, therefore we can write $\cals=\bigcap_{i\in\nat} \calu_i$ and $2^\pairs\setminus\cals=\bigcap_{j\in\nat}\calv_j$ with $\calu_i, \calv_j\subseteq 2^\pairs$ open for all $i,j\in\nat$. Then it follows from the claim and the definition of the Vietoris topology that 
$$A_1=\{x\in[F]:\ \forall i\in\nat\ (\calp^x\subseteq U_i)\}$$
and
$$A_2=\{x\in[F]:\forall j\in\nat\ (\calp^x\subseteq V_j)\}$$
are $\mathbf{\Pi^0_3}$, which concludes the proof.
\end{proof}

\section{Open problems}\label{s.open_problems}

First, let us recall the main question of Section~\ref{s.determinacy_and_descriptive_complexity}.

\begin{question}
Is every game of the form $G(\cals)$ determined?
\end{question}

The following variant is also natural.

\begin{question}
Is $G(\cals)$ determined for every monotone property $\cals$?
\end{question}

Regarding the relation between the transfinite game and the infinite but not transfinite game, Theorem~\ref{t.separating_notions} proves the existence of a graph property that is $\infty$-elusive but not elusive. However, the constructed property $\cals_0$ is rather tailor-made and, by Proposition~\ref{p.S_0_not_borel}, it is not even Borel. This raises two questions.

\begin{question}\label{q.separating_by_natural}
Is there a \emph{natural} graph property $\cals$ that is $\infty$-elusive but not elusive? By natural graph property, we mean a property a graph theorist would normally consider.
\end{question}

\begin{question}\label{q.seperating_by_borel}
Is there a \emph{Borel} graph property $\cals$ that is $\infty$-elusive but not elusive?
\end{question}

We conclude the paper by asking whether the natural generalization of Theorem~\ref{t.isolated} holds.

\begin{question}
Is there $k\geq 2$ such that the property ``every vertex has degree at least $k$'' is $\infty$-elusive?  
\end{question}

Csernák and Soukup have proven that this property is not elusive for every $k\geq 1$ but it remains unclear whether Seeker has a winning strategy if they are not allowed to play transfinitely.

\section*{Acknowledgment}

This research was supported by the National Research, Development and Innovation Office -- NKFIH, grant no. 124749 and 146922.
The first author was also supported by the National Research, Development and Innovation Office -- NKFIH, grant no.~129211. The second and third authors were also supported by the Hungarian Academy of Sciences Momentum Grant no. 2022-58.

\printbibliography[title={References}]

@book{KECHRIS,
title={Classical descriptive set theory},
author={Kechris, Alexander S.},
year={2012},
publisher={Springer},
address={New York}
}

@article{CSERNAK_SOUKUP_2023,
author={Csernák, Tamás and Soukup, Lajos},
title={Elusive properties of infinite graphs},
journal={Journal of Graph Theory},
year={2023},
volume={105},
issue={3},
pages={427--450.}
}

@article{ROSENBERG_1973,
author = {Rosenberg, Arnold Leonard},
title = {On the time required to recognize properties of graphs: a problem},
year = {1973},
publisher = {Association for Computing Machinery},
address = {New York, NY, USA},
volume = {5},
number = {4},
issn = {0163-5700},
url = {https://doi.org/10.1145/1008299.1008302},
doi = {10.1145/1008299.1008302},
journal = {SIGACT News},
month = oct,
pages = {15–16},
numpages = {2}
}

@inproceedings{RIVEST_VUILLEMIN_1975,
author = {Rivest, Ronald L. and Vuillemin, Jean},
title = {A generalization and proof of the {Aanderaa--Rosen\-berg} conjecture},
year = {1975},
isbn = {9781450374194},
publisher = {Association for Computing Machinery},
address = {New York, NY, USA},
url = {https://doi.org/10.1145/800116.803747},
booktitle = {Proceedings of the Seventh Annual ACM Symposium on Theory of Computing},
pages = {6--11.},
series = {STOC '75}
}

@article{LENSTRA_BEST_VAN_EMDE_BOAS,
author = {Lenstra, Hendrik and Best, M.R. and van Emde Boas, Peter},
year = {1974},
month = {01},
pages = {},
title = {A sharpened version of the {Aanderaa-Rosenberg} conjecture},
journal = {Report 30/74, Mathematisch Centrum Amsterdam (1974)}
}

@article{BOLLOBAS_1976,
author={Bollobás, Béla},
title={Complete subgraphs are elusive},
journal={Journal of Combinatorial Theory, Series B},
year={1976},
volume={21},
number={1},
pages={1--7.}
}

@article{KAHN_SAKS_STURTEVANT_1984,
author={Kahn, Jeff and Saks, Michael and Dean Sturtevant},
title={A topological approach to evasiveness},
journal={Combinatorica},
year={1984},
volume={4},
pages={297--306.}
}

@article{YAO_1988,
author = {Yao, Andrew Chi-Chih},
title = {Monotone Bipartite Graph Properties are Evasive},
journal = {SIAM Journal on Computing},
volume = {17},
number = {3},
pages = {517-520},
year = {1988}
}

@article{CHAKRABARTI_KHOT_SHI,
author = {Chakrabarti, Amit and Khot, Subhash and Shi, Yaoyun},
title = {Evasiveness of Subgraph Containment and Related Properties},
journal = {SIAM Journal on Computing},
volume = {31},
number = {3},
pages = {866--875.},
year = {2001}
}

@article{BABAI_BANERJEE_KULKARNI_NAIK_2010,
author = {Babai, Laszlo and Banerjee, Anandam and Kulkarni, Raghav and Naik, Vipul},
year = {2010},
month = {03},
pages = {71--82.},
title = {Evasiveness and the Distribution of Prime Numbers},
volume = {5},
journal = {Proc. 27th Ann. Symp. on Theoretical Aspects of Comp. Sci (STACS 2010)},
doi = {10.4230/LIPIcs.STACS.2010.2445}
}

@inproceedings{KULKARNI_2013,
author = {Kulkarni, Raghav},
title = {Evasiveness through a circuit lens},
year = {2013},
isbn = {9781450318594},
publisher = {Association for Computing Machinery},
url = {https://doi.org/10.1145/2422436.2422454},
doi = {10.1145/2422436.2422454},
booktitle = {Proceedings of the 4th Conference on Innovations in Theoretical Computer Science},
pages = {139–144},
numpages = {6},
series = {ITCS '13}
}

@article{SCHEIDWILER_TRIESCH_2013,
author = {Scheidweiler, Robert and Triesch, Eberhard},
title = {A Lower Bound for the Complexity of Monotone Graph Properties},
journal = {SIAM Journal on Discrete Mathematics},
volume = {27},
number = {1},
pages = {257--265.},
year = {2013},
doi = {10.1137/120888703},
URL = {https://doi.org/10.1137/120888703}
}

@article{MILLER_2013,
url = {http://dx.doi.org/10.1561/0400000055},
year = {2013},
volume = {7},
journal = {Foundations and Trends® in Theoretical Computer Science},
title = {Evasiveness of Graph Properties and Topological Fixed-Point Theorems},
doi = {10.1561/0400000055},
issn = {1551-305X},
number = {4},
pages = {337--415.},
author = {Carl A. Miller}
}

@article{SHPARLINSKI_2014,
title = {Evasive properties of sparse graphs and some linear equations in primes},
journal = {Theoretical Computer Science},
volume = {547},
pages = {117--121.},
year = {2014},
issn = {0304-3975},
doi = {https://doi.org/10.1016/j.tcs.2014.06.005},
url = {https://www.sciencedirect.com/science/article/pii/S0304397514004290},
author = {Igor E. Shparlinski},
keywords = {Evasiveness, Sparse graphs, Prime numbers}
}

@article{ANGEL_BORJA_2019,
author = {Ángel, Andrés and Borja, Jerson},
title = {The evasiveness conjecture and graphs on 2p vertices},
journal = {Journal of Graph Theory},
volume = {91},
number = {1},
pages = {35--52.},
doi = {https://doi.org/10.1002/jgt.22419},
url = {https://onlinelibrary.wiley.com/doi/abs/10.1002/jgt.22419},
year = {2019}
}

\section{Appendix}

Here we finish the proof of Theorem~\ref{t.bipartite} by checking 21 cases. Recall the following figure and conditions (I)-(III).

\begin{figure}[ht]
\centering
\includegraphics[scale=0.5]{figure_01_1.jpg}
\end{figure}

(I) There is a green cycle of odd length. 

(II) There is no green-white cycle of odd length. 

(III) The green edges form a connected bipartite graph that covers $n$, and there is no white edge between the vertices of this graph.

As long as Seeker does not play $A$, $B$, $C$, or the last white edge from $\{D, G\}$ or $\{E, F\}$, Hider responds red, and $\lnot$(II) remains true.

\textbf{Case 1.} Seeker plays the last edge from $\{D,G\}$ or $\{E,F\}$ before playing $A$, $B$, or $C$. By symmetry, we may assume that this last edge is $D$. Hider colors it green, $\lnot$(I) remains true. From now on as long as Seeker plays edges only from $\{E, F\}$, Hider responds red, and $\lnot$(II) remains true. At some point, Seeker must play $A$, $B$, or $C$.

\textbf{Subcase 1.1.} Seeker plays $A$ first from $\{A, B, C\}$. Hider responds green, $\lnot$(I) remains true. From now on as long as Seeker plays edges only from $\{E, F\}$, Hider responds red, $\lnot$(II) remains true. At some point, Seeker must play an edge from $\{B, C\}$, then Hider responds green thereby reaching Position 1 or 2.

\textbf{Subcase 1.2.} Seeker plays $B$ first from $\{A, B, C\}$. Hider responds green thereby reaching Position 3.

\textbf{Subcase 1.3.} Seeker plays $C$ first from $\{A, B, C\}$. Hider responds green, $\lnot$(I) remains true. From now on as long as Seeker plays edges only from $\{E, F\}$, Hider responds red, $\lnot$(II) remains true. At some point, Seeker must play $A$ or $B$.

\textbf{Subcase 1.3.1.} Seeker plays $A$ first from $\{A, B\}$. Hider responds green thereby reaching Position 2.

\textbf{Subcase 1.3.2.} Seeker plays $B$ first from $\{A, B\}$. Now if $F$ is red, then Hider responds green thereby reaching Position 4. If $F$ is white, then Hider responds red, $\lnot$(II) remains true ($DAF$ is a green-white triangle). From now on as long as Seeker does not play $A$ or $F$, Hider responds red, $\lnot$(II) holds. When Seeker plays $A$ or $F$, Hider responds green thereby reaching Position 2 or 5.

\textbf{Case 2.} Seeker plays $B$ or $C$ before playing $A$ or the last edge from $\{D, G\}$ or $\{E, F\}$. By symmetry, we may assume that she plays $B$. Hider responds green, $\lnot$(I) remains true.

\textbf{Subcase 2.1.} At this point, both $D$ and $E$ are white. Then $DEJ$ is a green-white triangle. From now on as long as Seeker does not play $D$ or $E$, Hider responds red, $\lnot$(II) holds. When Seeker plays $D$ or $E$, Hider responds green thereby reaching Position 3 or 6.

\textbf{Subcase 2.2.} At this point, exactly one of $D$ and $E$ is white. By symmetry, we may assume that it is $E$. Then $G$ is also white.

\textbf{Subcase 2.2.1.} Seeker plays $A$ next. Hider responds green, $\lnot$(I) remains true. From now on as long as Seeker does not play $G$ or the last edge from $\{E, F\}$, Hider responds red, $\lnot$(II) remains true (either $AGE$ or $FGJ$ is a green-white triangle). When Seeker plays $G$ or the last edge from $\{E, F\}$, Hider responds green thereby reaching Position 7, 8, or 9.

\textbf{Subcase 2.2.2.} Seeker plays $C$ next. Hider responds red, $\lnot$(II) remains true.

\textbf{Subcase 2.2.2.1.} Seeker plays $A$ next. Hider responds green and follows Subcase 2.2.1.

\textbf{Subcase 2.2.2.2.} Seeker plays $E$ next. Hider responds green thereby reaching Position 6.

\textbf{Subcase 2.2.2.3.} Seeker plays $F$ next. Hider responds red, $\lnot$(II) remains true. From now on as long as the graph formed by the green edges is not connected, Hider always responds green, $\lnot$(I) remains true. When it becomes connected, they reach Position 6, 7, 8, or 10.

\textbf{Subcase 2.2.2.4.} Seeker plays $G$ next. Hider responds green, $\lnot$(I) remains true. From now on independently of the order in which Seeker plays the edges, Hider responds red to $F$ and green to the first edge from $\{A, E\}$, $\lnot$(II) remains true. When the first edge from $\{A, E\}$ is colored green, they reach Position 7 or 10.

\textbf{Subcase 2.2.3.} Seeker plays $E$ next. Hider responds green thereby reaching Position 6.

\textbf{Subcase 2.2.4.} Seeker plays $F$ next. Hider responds red, $\lnot$(II) remains true.

\textbf{Subcase 2.2.4.1.} Seeker plays $A$ next. Hider responds green, $\lnot$(I) remains true. From now on independently of the order in which Seeker plays the edges, Hider responds red to $C$ and green to the first edge from $\{E,G\}$, $\lnot$(II) remains true. When the first edge from $\{E, G\}$ is colored green, they reach Position 7 or 8.

\textbf{Subcase 2.2.4.2.} Seeker plays $C$ next. Hider responds red and follows Subcase 2.2.2.3.

\textbf{Subcase 2.2.4.3.} Seeker plays $E$ next. Hider responds green thereby reaching Position 6.

\textbf{Subcase 2.2.4.4.} Seeker plays $G$ next. Hider responds green, $\lnot$(I) holds. From now on independently of the order in which Seeker plays the edges, Hider responds green only to $A$ or the last edge from $\{C, E\}$, and $\lnot$(II) remains true (either $ABC$ or $AGE$ is a green-white triangle). When $A$ or the last edge from $\{C, E\}$ is colored green, they reach Position 7, 10, or 11.

\textbf{Subcase 2.2.5.} Seeker plays $G$ next. Hider responds green, $\lnot$(I) remains true.

\textbf{Subcase 2.2.5.1.} Seeker plays $A$ next. Hider responds green thereby reaching Position 7.

\textbf{Subcase 2.2.5.2.} Seeker plays $C$ next. Hider responds red and Hider follows Subcase 2.2.2.4.

\textbf{Subcase 2.2.5.3.} Seeker plays $E$ next. Hider responds red, $\lnot$(II) remains true. From now on independently of the order in which Seeker plays the edges, Hider responds red to $F$ and green to the first edge from $\{A, C\}$, and $\lnot$(II) remains true. When the first edge from $\{A, C\}$ is colored green, they reach Position 7 or 11.

\textbf{Subcase 2.2.5.4.} Seeker plays $F$ next. Hider responds red and follows Subcase 2.2.4.4.

\textbf{Subcase 2.3.} At this point, both $D$ and $E$ are red. From now on independently of the order in which Seeker plays the edges, Hider responds green exactly to the first edge from $\{A, C\}$ and the first edge from $\{F, G\}$, meanwhile $\lnot$(I) and $\lnot(II)$ remain true. When both $\{A, C\}$ and $\{F, G\}$ contain a green edge, they reach Position 7, 9, 11, or 12.

\textbf{Subcase 3.} Seeker plays $A$ before playing $B$, $C$ or the last edge from $\{D, G\}$ or $\{E,F\}$. Hider responds green, $\lnot(I)$ remains true. Then independently of the order in which Seeker plays the edges, Hider plays as follows. She responds green to the first and red to the second edge from $\{B, C\}$. As long as Seeker does not play the last edge from $\{D, G\}$ or $\{E, F\}$, Hider responds red to every edge from $\{D, E, F, G\}$. In the first turn when Seeker plays the last edge from $\{D, G\}$ or $\{E, F\}$, Hider responds green. After that, she responds red to every edge from $\{D, E, F, G\}$. As one checks easily, $\lnot$(I) remains true. Also, $\lnot$(II) remains true --- since at least one of the triangles $ABC$, $AFD$, $AGE$, $DEJ$, and $FGJ$ is green-white --- until they reach Position 1, 7, 8, 9, or one of their reflections to the horizontal axis, which is also a winning position by symmetry.

The proof of Theorem~\ref{t.bipartite} is complete.

\end{document}